\newtheorem{theorem}{Theorem}
\newtheorem{corollary}[theorem]{Corollary}
\newtheorem{proposition}[theorem]{Proposition}
\newtheorem{lemma}[theorem]{Lemma}
\newtheorem{definition}[theorem]{Definition}
\newtheorem{remark}[theorem]{Remark}
\newtheorem{theorem*}{Theorem}
\newtheorem{question*}[theorem*]{Question}
\newtheorem{conjecture*}[theorem*]{Conjecture}
\newtheorem{corollary*}[theorem*]{Corollary}
\newtheorem{theorem*e}{Teorema}
\newtheorem{question*e}[theorem*e]{Pregunta}
\newtheorem{conjecture*e}[theorem*e]{Conjetura}
\newtheorem{corollary*e}[theorem*e]{Corolario}
\newcommand{\Fix}{\mathrm{Fix}}
\renewcommand{\int}{\mathrm{int}}
\newcommand{\Inv}{\mathrm{Inv}}
\newcommand{\sgn}{\mathrm{sgn}}
\renewcommand{\deg}{\mathrm{deg}}
\newcommand{\guion}{\,-\,}
\title{Fixed point indices of planar continuous maps}
\author{Luis Hernández--Corbato}
\address{IMPA, Estrada dona Castorina 110, Rio de Janeiro, Brazil.}
\email{luishcorbato@mat.ucm.es}
\author{Francisco R. Ruiz del Portal}
\address{Facultad de Matemáticas UCM, Plaza de Ciencias 3, Madrid, Spain.}
\email{rrportal@ucm.es}
\begin{document}

\begin{abstract}
We characterize the sequences of fixed point indices $\{i(f^n, p)\}_{n\ge 1}$ of fixed points
that are isolated as an invariant set and continuous maps in the plane. In particular, we prove
that the sequence is periodic and $i(f^n, p) \le 1$ for every $n \ge 0$. This characterization allows us to
compute effectively the Lefschetz zeta functions for a wide class of continuous maps in the \(2\)-sphere, to obtain
new results of existence of infinite periodic orbits inspired on previous articles of J. Franks and to give a partial answer to a problem of Shub about the growth of the number of periodic orbits of degree--\(d\) maps in the 2-sphere.
\end{abstract}

\maketitle

\section{introduction}
The fixed point index of a map is a topological invariant which counts with multiplicity the fixed points
of a map. Lefschetz Theorem relates the index to the homological behavior of a map and has been
extensively used to prove existence of periodic points of a map in the literature. The study of the
integer sequence $\{i(f^n, p)\}_{n \ge 1}$ provides a better understanding of the set of periodic points.
Dold proved in \cite{dold} that the sequence of fixed point indices must satisfy a certain modular relations now widely named \emph{Dold's congruences}.
This is the only general result concerning these sequences.

Probably the best well--known restriction appears in the $C^1$ category. Shub and Sullivan proved in \cite{shubsullivan} that
the fixed point index sequence $\{i(f^n, p)\}_{n \ge 1}$ is periodic if $f$ is $C^1$.
In the low--dimensional setting, topology plays a major role. Periodicity holds
as long as we consider planar homeomorphisms, see \cite{brown, lecalvezENS}.
Yet another hypothesis which also gives rise to major obstructions is to consider fixed points isolated as invariant sets as in Conley index theory, in other words, assume that the fixed point is the maximal invariant set of any of its neighborhoods.
This extra assumption enforces a very rigid behavior of the sequence of indices for planar homeomorphisms
(\cite{lecalvezyoccoz,lecalvezyoccozconley, rportalsalazar})
and also enforces periodicity in dimension 3 (\cite{london}), where some additional restrictions appear in the orientation--reversing case (\cite{gt}).

The objective of this work is to characterize the fixed point index sequence for
planar continuous maps $f$ and fixed points $p$ isolated as invariant sets.
Our result represents, presumably, the only
non--trivial existing restriction in the continuous case.
Babenko and Bogatyi in \cite{babenko} and then Graff and Nowak--Przygodzki in \cite{graff}
showed that there is no restriction other than Dold's congruences in the planar continuous case
as long as no hypothesis at all is made about the kind of isolation of the fixed point.

The main theorem in this article, which solves a problem posed by Graff, is the following (notation will be explained in Subsection \ref{sec:index}).

\begin{theorem}\label{thm:formula}
Let $U$ be an open subset of the plane, $f : U \to f(U) \subset \mathds{R}^2$ be a continuous map
and $p$ a fixed point of $f$ which is isolated as an invariant set and is neither a source nor a sink. Then,
$$\{i(f^n, p)\}_{n \ge 1} = \sigma^1 -  \sum_{k \in F} a_k \sigma^k$$
where $F$ is a finite non--empty subset of $\mathds{N}$ and $a_k$ is a positive integer for every $k \in F$.
\end{theorem}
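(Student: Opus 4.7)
\medskip

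\noindent\textbf{Proof plan.} My strategy is to reduce the computation of $\{i(f^n,p)\}_{n\ge 1}$ to the analysis of an associated one-dimensional dynamical system on the boundary of an isolating block, following the template set for planar homeomorphisms in \cite{lecalvezyoccoz,lecalvezyoccozconley,rportalsalazar}, and then control the signs via an orientation argument at the induced periodic orbits.

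First I would construct a filtration (Szymczak-type) pair $(N,L)$ around $p$: $N$ a closed disk neighborhood with maximal invariant set $\{p\}$, and $L\subset\partial N$ its ``exit set.'' Since $p$ is isolated as an invariant set and is neither a source nor a sink, both $L$ and $\overline{\partial N\setminus L}$ must be non-empty, so the local picture is genuinely of saddle type. By iterating and trimming I can refine $N$ so that $L$ consists of finitely many disjoint arcs. The fixed point index of every iterate can then be read off a Lefschetz-type computation on the pair $(N,L)$.

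Next I would attach to $f$ a continuous ``boundary map'' $g$ of $\partial N$ (or of a nearby invariant one-complex) that records the first-return combinatorics between the arcs of $L$ and those of $\partial N\setminus L$. For a homeomorphism such a map is essentially classical. For a continuous, non-injective $f$ I would lift to the shift homeomorphism on the inverse limit
\[
\widehat N \;=\; \varprojlim\bigl(N\setminus\{p\},\,f\bigr),
\]
apply the classification of index sequences for planar homeomorphisms with isolated invariant fixed points to the induced homeomorphism $\widehat f$ at the canonical lift $\widehat p$, and push the resulting formula back down using that the fixed point index is preserved by the projection $\widehat N\to N$. With this boundary model in place, $i(f^n,p)$ is computed by additivity: the contractible disk $N$ contributes the constant sequence $\sigma^1$, and each periodic orbit of $g$ of minimal period $k$ subtracts a positive integer multiple of $\sigma^k$, the sign being forced by the orientation-reversing behaviour of $g$ at a saddle-type periodic orbit. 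Positivity of each $a_k$ follows from this counting interpretation, finiteness of $F$ from compactness of $\partial N$ together with a Shub--Sullivan-style argument forbidding the accumulation of periodic points of arbitrarily large period at $p$ under the isolation hypothesis, and non-emptiness of $F$ from the non-source/non-sink assumption.

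The step I expect to be the main obstacle is the construction of the boundary map $g$ for a genuinely non-injective $f$. The inverse-limit route is conceptually clean but requires verifying that the Conley--Szymczak index data of $\widehat f$ at $\widehat p$ genuinely matches the full sequence $\{i(f^n,p)\}$, and that $\widehat p$ is still isolated as an invariant set and neither a source nor a sink for $\widehat f$; both properties can fail if the local basin structure of $f$ is not locally connected. Handling this point, possibly by replacing the inverse limit with an \emph{ad hoc} planar approximation built from a finite, coherent system of branches of $f^{-n}$ meeting any neighborhood of $p$, is where I expect the bulk of the technical work to be concentrated.
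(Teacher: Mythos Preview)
Your proposal has a genuine gap at precisely the point you flagged as the main obstacle, and the gap is more serious than you suggest. The inverse limit $\widehat N=\varprojlim(N\setminus\{p\},f)$ of a non--injective planar map is in general \emph{not} a planar space: already for $z\mapsto z^2$ on a circle the inverse limit is a solenoid, which does not embed in $\mathds{R}^2$. Hence there is no reason to expect that the Le~Calvez--Yoccoz / Ruiz~del~Portal--Salazar theorem for planar homeomorphisms applies to $\widehat f$ at $\widehat p$. The fallback you sketch (an ad hoc planar approximation from branches of $f^{-n}$) is not a substitute: without injectivity there is no canonical way to organize such branches into a single planar homeomorphism whose local index sequence coincides with $\{i(f^n,p)\}$. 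A second, independent issue is your assumption that the isolating block $N$ can be taken to be a closed disk; for continuous (non--homeomorphic) planar maps no such result is available, and the paper explicitly points this out.

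The paper's route is quite different and avoids both problems. Instead of passing to an inverse limit, it \emph{collapses}: one builds a carefully chosen nested pair of regular isolating blocks $B\subset B'\subset B_0$ so that each connected component of $\overline{B'\setminus B}$ meets the relative exit boundary $\partial_{B_0}B$ in exactly one arc, then identifies each component of $\partial_{B_0}B$ to a point to obtain a planar (or spherical) space $\widehat B$ with an induced continuous self-map $\widehat f$. The possibly non--trivial topology of $B$ is handled not by forcing $B$ to be a disk but by a nilpotence lemma: a neighborhood of the stable set is eventually mapped into a disk, and this forces $\widehat f_{*,1}$ to be nilpotent. Lefschetz then gives $i(\widehat f^n,\widehat B)=1$, and the collapsed exit arcs form a finite set $Q$ on which $\widehat f$ is locally constant, so each fixed point of $\widehat f^n$ in $Q$ contributes index~$1$. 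Subtracting yields $i(f^n,p)=1-\#\Fix(\varphi^n)$ with $\varphi=\widehat f_{|Q}$, which is exactly the claimed formula. In short: the paper uses a quotient construction and a homological nilpotence argument, not an inverse limit reduction to the homeomorphism case.
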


From our theorem we easily recover the main result of \cite{graffpaco}.

\begin{corollary}\label{cor:index<1}
With the previous hypothesis, the sequence $\{i(f^n, p)\}_{n\ge 1}$ is periodic and bounded from above by 1.
\end{corollary}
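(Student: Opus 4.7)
The plan is to deduce the corollary as an immediate bookkeeping consequence of Theorem \ref{thm:formula}. Using the standard interpretation $\sigma^k(n) = k$ when $k \mid n$ and $\sigma^k(n) = 0$ otherwise (so in particular $\sigma^1 \equiv 1$), the formula in Theorem \ref{thm:formula} reads
\[
i(f^n, p) \;=\; 1 \;-\; \sum_{\substack{k \in F \\ k \mid n}} a_k\, k,
\]
and both conclusions of the corollary will follow by direct inspection of the right-hand side.

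For the upper bound, I would observe that each $a_k$ is a positive integer and each $k \ge 1$, so the sum contributes a non-negative integer for every $n$; hence $i(f^n, p) \le 1$ for all $n \ge 1$. (In fact one obtains the sharper statement $i(f^n, p) \le 0$ whenever some element of $F$ divides $n$, but only the weaker inequality is needed here.)

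For the periodicity, I would set $N = \mathrm{lcm}\{k : k \in F\}$, a well-defined positive integer because Theorem \ref{thm:formula} guarantees that $F$ is finite and non-empty. Since $k \mid N$ for every $k \in F$, the truth value of $k \mid n$ depends only on the residue of $n$ modulo $N$; consequently the indexing set of the sum, and hence the full expression on the right, is determined by $n \bmod N$. This shows that $\{i(f^n, p)\}_{n \ge 1}$ is periodic of period dividing $N$.

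No step of this plan presents a genuine obstacle: the entire structural content has been absorbed into Theorem \ref{thm:formula}, and what remains is the elementary observation that each basic Dold sequence $\sigma^k$ is itself periodic and non-negative. The only mild subtlety is that the statement of the corollary is phrased under the blanket ``previous hypothesis,'' and one should note that the source/sink case excluded from Theorem \ref{thm:formula} is trivial, as the index sequence is then constantly equal to $1$.
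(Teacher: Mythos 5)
Your argument is correct and is precisely what the paper intends: Corollary~\ref{cor:index<1} is offered as an immediate consequence of Theorem~\ref{thm:formula} with no separate proof given, and you read off both the bound $i(f^n,p) \le 1$ and the periodicity (period dividing $\mathrm{lcm}\,F$) from the formula exactly as the authors expect. One small correction to your closing remark, however: in the sink case the index sequence is indeed constantly $1$ (Proposition~\ref{prop:sinks}), but in the source case one has $i(f^n, p) = d^n$ for some integer $d$ (Proposition~\ref{prop:sources}), which is unbounded when $|d| \ge 2$; since the corollary's ``previous hypothesis'' already excludes sources and sinks this does not affect your proof, but it is not the case that the conclusion trivially extends to sources.
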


The proof of the characterization theorem involves overcoming some technical issues.
Our approach also relies heavily in Conley index theory, but
only uses its basic tools, making our exposition mostly self--contained.
We follow the approach to the discrete Conley index explained in \cite{franksricheson},
another standard introductory reference is \cite{handbookconley}. The application of this theory
to our setting, the study of indices of locally maximal fixed points,
started with Franks in \cite{franksminimal} and since then has proved to be successful in the proofs
of the aforementioned results. More precisely, the idea behind the arguments is the description of the first
homological discrete Conley index, for a detailed and enlightening discussion in the case of homeomorphisms see \cite{gt}.

We will also deal with a particular case of an old problem posed by Shub.
Let \(S^2\)  be the 2-sphere, oriented in the standard way. Fix a continuous map \(f : S^2 \rightarrow S^2\) of global degree
\(2\). That is, the map \(f_{*,2} : H_2(S^2) \rightarrow H_2(S^2)\) is multiplication by \(2\). Problem 3 of \cite{shub} asks: Is the Growth Rate Inequality
\begin{equation}\label{eq:growth}
\limsup_{n \to \infty} \frac{1}{n}\ln N_n(f) \geq \ln(2)
\end{equation}
true?
Here \(N_n(f)\) denotes the number of fixed points of $f^n$. 
A very simple map whose dynamics is North Pole to South Pole provides a negative answer in the $C^0$ category.
Recently, the inequality has been shown to be true in some special cases: for $C^1$ latitude--preserving maps
in \cite{pughshub} and for maps with two attracting fixed points in \cite{uruguayos}.
It is conjectured to hold as well for any $C^1$ map but, up to the authors knowledge, no further
significant progresses have been made towards the solution.
We will show that the answer is also positive for maps such that all its periodic orbits are isolated as invariant sets provided 
$f$ has no sources of degree $r$ with $|r| \ge 1$.

\subsection*{Acknowledgements} The authors would like to thank our collaborator Patrice Le Calvez
for sharing his insightful ideas during the
preparation of the article \cite{gt} which have proved to be equally useful to conduct this research.
The authors have been supported by MINECO, MTM2012-30719, and the second author also by a P\'os--Doutorado Junior
grant (162586/2013-2) from CNPq.

\section{Definitions and basic concepts}

\subsection{Planar topology}
Let us fix notations. If $A \subset B$ are subsets of a topological space we will denote
$\int(A), \overline{A}, \partial A, \partial_B A$ and $\pi_0(A)$ the interior, adherence, boundary,
boundary relative to $B$ and set of connected components of $A$. The relative boundary of $A$ in $N$ is defined as
$\partial_B{A} = A \cap (\overline{B \setminus A})$.

Recall that a \emph{Jordan arc} is the image of a one-to-one continuous map $\phi : [0,1] \to S^2$.
In the case $\phi(0) = \phi(1)$ it is called a \emph{Jordan curve}.
A compact subset $A$ of $S^2$ or $\mathds{R}^2$ will be said to be \emph{regular} if it is a 2-manifold with boundary.
The boundary of a regular set $A$ is a 1-manifold, hence a disjoint union of Jordan curves.

It is interesting to remark that if $\alpha$ is a Jordan arc or a Jordan curve contained in $\partial A$,
the quotient space $\widehat A$ obtained from $A$ by
identifying $\alpha$ to a point is homeomorphic to a 2-submanifold with (possibly empty) boundary of $S^2$.
Clearly, if we identify every Jordan curve in which the boundary of $A$ is decomposed to a different point we obtain
a set homeomorphic to the 2-sphere.
Note also that, for every regular set $A$, the homology group $H_1(A, \mathds{Q})$ is generated
by the 1-cycles represented by the connected components of $\partial A$ when viewed as 1-chains.
As $\mathds{Q}$ will always be the coefficient ring in the homology groups we will omit it from the notation.


A great part of technical issues which have been dealt with in this work concerns
the relative display of regular sets. Given two regular sets $A \subset B$ of $S^2$,
the inclusion induces a map
$$\iota : \pi_0(S^2 \setminus B) \to \pi_0(S^2 \setminus A).$$
Similarly, a homomorphism between reduced 0-homology groups is established. It follows from Alexander duality
that the dual of this homomorphism is the map $H_1(A) \to H_1(B)$ induced by inclusion.
Thus, this map is surjective if and only if $\iota$ is injective.
There is no need to take adherences for $S^2 \setminus A$ and $S^2 \setminus B$ because of the assumption on regularity.

Recall that a compact subset $K$ of the plane is said to be \emph{cellular} if it has a basis of neighborhoods
composed of topological closed disks. This is equivalent, only in dimension 2, to being an \emph{acyclic continua}.
It is well--known that any given open neighborhood $U$ of a cellular set $K$ is homeomorphic to the quotient space $U/K$.
It follows that Theorem \ref{thm:formula} holds if we replace the fixed point $p$ by a cellular invariant set $K$.

\subsection{Fixed point index}\label{sec:index}

Let $U$ be an open subset of $\mathds{R}^d$ and $f : U \to f(U) \subset \mathds{R}^d$ a continuous map.
The index of $f$ at a fixed point $p$ will be denoted $i(f, p)$. The integer sequence $\{i(f^n, p)\}_{n \ge 1}$
will be called \emph{fixed point index sequence} (of $f$ at $p$).

After Dold \cite{dold}, it is known that the fixed point index sequence satisfies non--trivial relations
often denominated Dold's congruences. Define the \emph{normalized sequences} $\sigma^k = \{\sigma^k_n\}_{n \ge 1}$, where
$$
\sigma^k_n =
\begin{cases}
k & \text{if } n \in k \mathds{N} \\
0 & \text{otherwise.}
\end{cases}
$$
One way to express Dold's congruences is that any fixed point index sequence $I = \{I_n\}_{n \ge 1}$
is written (uniquely) as a formal combination of normalized sequences $I = \sum_{k \ge 1} a_k \sigma^k$ where each $a_k$ is an integer.

The following lemma is a classical result in this topic. For the proof
and more information on the subject we refer the reader to \cite{babenko}.

\begin{lemma}\label{lem:index}
Given a fixed point index sequence $I = \{I_n\}_{n \ge 1}$, the following statements are equivalent:
\begin{enumerate}
\item $I$ is bounded.
\item $I$ is periodic.
\item There exists maps $\varphi': J' \to J', \varphi: J \to J$ where $J', J$ are finite sets such that
$$I_n = \#\Fix(\varphi'^n) - \#\Fix(\varphi^n).$$
\item Only a finite number of $a_k$ is non--zero.
\end{enumerate}
\end{lemma}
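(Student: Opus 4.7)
The plan is to establish the cycle $(4) \Rightarrow (3) \Rightarrow (2) \Rightarrow (1) \Rightarrow (4)$; the first three arrows will be routine, while the last one will be the substantive step, requiring M\"obius inversion of the Dold decomposition together with the integrality of its coefficients.

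For $(4) \Rightarrow (3)$, the key observation is that each $\sigma^k$ is already realized as a fixed point count: if $\varphi_k$ denotes a cyclic permutation on $k$ points, then $\#\Fix(\varphi_k^n)$ equals $k$ when $k \mid n$ and $0$ otherwise, i.e.\ exactly $\sigma^k_n$. Splitting the finite sum $I = \sum_{k \in F} a_k \sigma^k$ into its positive and negative parts and taking disjoint unions of the appropriate number of copies of these $\varphi_k$ furnishes the desired $\varphi'$ and $\varphi$. For $(3) \Rightarrow (2)$, every orbit of a self-map of a finite set is eventually periodic, so $\#\Fix(\varphi^n)$ depends only on the divisibility of $n$ by a finite list of orbit lengths and is therefore periodic in $n$. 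The implication $(2) \Rightarrow (1)$ is immediate.

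The heart of the argument is $(1) \Rightarrow (4)$. Inverting the defining identity $I_n = \sum_{k \mid n} k\, a_k$ by M\"obius yields $k\, a_k = \sum_{d \mid k} \mu(k/d) I_d$, so under the hypothesis $|I_n| \le M$ one obtains the bound
$$|a_k| \;\le\; \frac{M}{k} \sum_{d \mid k} |\mu(k/d)| \;=\; \frac{M \cdot 2^{\omega(k)}}{k},$$
where $\omega(k)$ denotes the number of distinct prime factors of $k$. Since $2^{\omega(k)} \le d(k) = O(k^\epsilon)$ for every $\epsilon > 0$, this bound tends to $0$ as $k \to \infty$.

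The main conceptual obstacle is that this analytic estimate only produces smallness of $a_k$, not vanishing. The missing ingredient is the integrality part of Dold's theorem, already quoted in the excerpt: each coefficient $a_k$ is an integer. Combined with $|a_k| < 1$ for all sufficiently large $k$, this forces $a_k = 0$ past some threshold, which is precisely condition $(4)$ and closes the cycle.
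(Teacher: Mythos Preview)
Your argument is correct. Note, however, that the paper does not actually prove this lemma: it simply states it as a classical fact and refers the reader to Babenko--Bogatyi \cite{babenko}. So there is no ``paper's own proof'' to compare against. Your cycle $(4)\Rightarrow(3)\Rightarrow(2)\Rightarrow(1)\Rightarrow(4)$ is the standard route, and each step is sound: the realization of $\sigma^k$ by a $k$-cycle, the eventual periodicity of fixed-point counts on finite sets, and---the only nontrivial implication---the M\"obius-inversion bound $|a_k|\le M\cdot 2^{\omega(k)}/k\to 0$ combined with the integrality of the $a_k$ from Dold's congruences. One minor remark on $(3)\Rightarrow(2)$: it is worth making explicit that for an arbitrary self-map of a finite set (not necessarily a bijection), a point is fixed by $\varphi^n$ only if it already lies on a periodic cycle, so $\#\Fix(\varphi^n)$ is exactly the number of periodic points whose period divides $n$; this is what makes the divisibility description go through.
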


The conclusion of Theorem \ref{thm:formula} can be reformulated in view of the previous lemma as:

\emph{There exists a map $\varphi : J \to J$ over a finite set $J$ such that for every $n \ge 1$
$$i(f^n, p) = 1 - \#\Fix(\varphi^n).$$}

This language is particularly useful to describe the previous results about homeomorphisms in a concise way.

\begin{theorem}[Le Calvez - Yoccoz \cite{lecalvezyoccoz, lecalvezyoccozconley}, Ruiz del Portal - Salazar
\cite{rportalsalazar}]
Let $U$ be an open subset of the plane, $f : U \to \mathds{R}^2$ be an orientation--preserving
(orientation--reversing) homeomorphism and $p$ a fixed point of $f$ which is isolated as invariant set
and is neither a source nor a sink. Then, there exists an orientation--preserving (orientation--reversing)
homeomorphism $\Phi : S^1 \to S^1$ and a finite non--empty set $J$ invariant under $\Phi$ such that,
if we denote $\varphi = \Phi_{|J}$,
$$i(f^n, p) = 1 - \#\Fix(\varphi^n)$$
for every $n \ge 1$.
\end{theorem}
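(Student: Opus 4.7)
The plan is to use the discrete Conley index, as suggested by the surrounding discussion. Since $p$ is isolated as an invariant set for the homeomorphism $f$, I would first build a filtration pair $(P, E)$ for $p$: here $P$ is a regular planar neighborhood of $p$ (a topological disk with finitely many holes) and $E \subset \partial P$ is the exit set, satisfying $f(E) \cap (P \setminus E) = \emptyset$. Under the hypothesis that $p$ is neither a source nor a sink, both $E$ and $\partial P \setminus E$ can be arranged to be non-empty unions of Jordan arcs, so that $P/E$ is a non-trivial compact space. The fundamental identity of Conley index theory then yields that $i(f^n, p)$ equals the Lefschetz number of the induced action of $f^n$ on the reduced homology of the pointed quotient $P/E$; this is where the homeomorphism hypothesis is first used, to guarantee that the induced map on the Conley index is well-defined and functorial in $n$.

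Since $P$ is a disk with holes, $\widetilde{H}_*(P/E)$ is concentrated in degrees $0$ and $1$. After collapsing the non-exit boundary components to points, one obtains a sphere with finitely many marked ``exit'' disks, and $H_1(P/E)$ is spanned by their boundary cycles. The action of $f^n$ on this space is, up to sign, a permutation of the exit components, so its trace equals $\#\Fix(\varphi^n)$ for a suitable finite-set map $\varphi$. The degree-$0$ contribution is constant and produces the summand $+1$ in the index formula, yielding $i(f^n, p) = 1 - \#\Fix(\varphi^n)$. Alexander duality together with the description of the map $\iota:\pi_0(S^2\setminus B)\to\pi_0(S^2\setminus A)$ recalled in the previous subsection are the technical tools that make this combinatorial identification precise, as they relate the dynamics on boundary components to the dynamics on $H_1$.

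The final step is to upgrade $\varphi$ to the restriction of an honest circle homeomorphism $\Phi:S^1\to S^1$ of the same orientation type as $f$. Since $f$ permutes the exit Jordan curves on $\partial P$, one cuts and re-glues them to form a single circle respecting the cyclic order inherited from the planar orientation; the resulting map $\Phi$ is then orientation-preserving (respectively, orientation-reversing) whenever $f$ is. The set $J$ is taken to consist of one marker on each exit component, and it is invariant by construction. The main obstacle I anticipate is precisely this geometric realization: one must choose the filtration pair so that the cyclic order of the exit boundaries is compatible with the permutation action on $H_1$, and the sign information in the orientation-reversing case must match the geometric reversal on $S^1$. The exclusion of sources and sinks is crucial here, as it ensures the symmetry between exit and entrance boundaries needed to carry out the gluing coherently and to guarantee that $J$ is non-empty.
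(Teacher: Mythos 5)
The theorem you are addressing is \emph{not} proved in this paper: it is stated and attributed to Le Calvez--Yoccoz and Ruiz del Portal--Salazar, and the paper only cites \cite{lecalvezyoccoz, lecalvezyoccozconley, rportalsalazar} for it. So there is no ``paper's own proof'' of this statement to compare against; what the paper actually proves is Theorem \ref{thm:formula}, the continuous-map analogue, by a different route (nested isolating blocks, collapsing exit components, and the nilpotence Lemma \ref{lem:nilpotent}).

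Evaluating your sketch on its own terms, the central step is asserted rather than argued. You write that after collapsing boundary components, ``the action of $f^n$ on this space is, up to sign, a permutation of the exit components, so its trace equals $\#\Fix(\varphi^n)$.'' This is precisely what the cited theorems establish, and it is far from automatic. For a general filtration pair the induced map on the first homology of the pointed quotient can be a complicated endomorphism of $\mathds{Z}^k$; showing that one can choose the isolating block so that this map is conjugate to a cyclic-order-preserving (or -reversing) permutation on a generating set of boundary cycles is the bulk of the work in \cite{lecalvezyoccoz} and \cite{rportalsalazar}. Without that, the trace identity you need --- that $\tr(f^n_{*,1})$ equals $\#\Fix(\varphi^n)$ shifted by a constant --- simply does not follow. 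Likewise, the claim that the $H_0$ contribution ``produces the summand $+1$'' needs care: with the reduced homology of the pointed quotient $P/E$, the degree-$0$ group vanishes when $P$ is connected, and the $+1$ must instead be extracted from the trace on $H_1$ (or, as in the paper's proof of Theorem \ref{thm:formula}, from applying Lefschetz to the unpointed disk/sphere $\widehat B$, whose $H_0$ contributes $1$ and whose $H_1$ is controlled by nilpotence). Finally, the geometric realization of $\varphi$ as the restriction of a circle homeomorphism $\Phi$ of the correct orientation type is not a mere bookkeeping upgrade; it encodes a strong dynamical constraint (for instance, in the orientation-preserving case all cycles of $\varphi$ necessarily have the same length), and demonstrating that the exit arcs inherit a cyclic order compatible with the action requires the detailed isolating-block constructions of the cited papers.

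In short, your outline reproduces the shape of the argument but treats as given exactly the hard structural facts that Le Calvez--Yoccoz and Ruiz del Portal--Salazar prove. If you want a proof in the spirit of this paper, you would need, at minimum, to adapt the Step I--IV construction of Theorem \ref{thm:formula} and then add the orientation/cyclic-order analysis that is special to homeomorphisms; the nilpotence lemma alone gives you periodicity and the bound $i(f^n,p)\le 1$, but not the circle-homeomorphism refinement.
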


The formula obtained in Theorem \ref{thm:formula} is clearly the expected characterization in the case of
continuous maps in view of the previous theorem. Indeed, a continuous map in $S^1$ may have an
arbitrary number of periodic orbits with different periods from which $J$ can be made up.

\subsection{Tools from Conley index theory}\label{sec:conley}


In the following, let $M$ be a locally compact metric space, $U \subset M$ open and $f : U \to M$ continuous.
We follow the approach of Franks and Richeson \cite{franksricheson} to the discrete Conley index theory.

\begin{definition}
A set $X \subset M$ such that $f(X) = X$ is called \emph{invariant} (under $f$).
A solution through $x \in M$ is a sequence $\{x_n\}_{-\infty}^{\infty}$ such that $x_0 = x$ and $f(x_{n}) = x_{n+1}$
for every $n \in \mathds{Z}$.
Given any set $N \subset M$, the \emph{maximal invariant subset} of $N$, denoted $\Inv(f, N)$,
is the set of points $x \in N$ for which there exists a solution through $x$ contained in $N$.
\end{definition}

Observe that $f(\Inv(f, N)) = \Inv(f, N)$ but they may not be equal to $f^{-1}(\Inv(f, N))$.
Note that, in general, $\Inv(f, N)$ is different from $\bigcap_{n \in \mathds{Z}}f^{-n}(N)$,
see Easton \cite{easton} for an example.
However, it is not difficult to derive a closed formula for $\Inv(f, N)$
(cf. Proposition 2.2 in \cite{franksricheson}):
\begin{equation}\label{eq:invN}
\Inv(f, N) = \bigcap_{m \ge 0}f^m\left(\bigcap_{n \ge 0} f^{-n}(N)\right).
\end{equation}

\begin{definition}
A compact set $N \subset U$ is called an \emph{isolating neighborhood} for $f$ if
$$\Inv(f, N) \subset \int(N).$$
Conversely, a compact invariant set $X$ is called \emph{isolated} if there is an isolating
neighborhood $N$ such that $X = \Inv(f, N)$.
A compact set $B$ is called an \emph{isolating block} for $f$ provided that it satisfies
$$f^{-1}(B) \cap \partial B \cap f(B) = \emptyset,$$
or, equivalently, there are no interior discrete tangencies, i.e., points $x \in B$ such that
$f(x) \in \partial B$ and $f^2(x) \in B$.
\end{definition}

In particular, any isolating block is an isolating neighborhood.
Given an invariant set $X$ we will say $N$ is an isolating neighborhood \emph{of $X$} or
$B$ is an isolating block \emph{of $X$} if $\Inv(f, N) = X$ or $\Inv(f, B) = X$, respectively.

One of the fundamental results in Conley index theory asserts that any compact isolated invariant set $X$ has a basis
of neighborhoods composed of isolating blocks.

\begin{lemma}\label{lem:isolatingblocks}
Let $B$ be an isolating block. Then
\begin{enumerate}
\item any connected component of $B$,
\item any set sufficiently close to $B$ and
\item $B \cap f^{-1}(B)$
\end{enumerate}
are all also isolating blocks.
\end{lemma}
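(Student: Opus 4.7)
The plan is to use throughout the reformulation of the block condition stated just above the lemma: $B$ is an isolating block if and only if there is no point $x \in B$ with $f(x) \in \partial B$ and $f^2(x) \in B$ (an interior discrete tangency). For each of the three constructions I would deduce, from a hypothetical tangency of the new set, an explicit tangency of $B$ itself, contradicting the hypothesis.

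For part (1), let $C$ be a component of $B$. The key observation is $\partial C \subset \partial B$: since the ambient space is locally connected, any point in $C \cap \int(B)$ admits a connected open neighborhood inside $B$, hence inside the component $C$, so it lies in $\int(C)$. A putative interior tangency $x$ for $C$ then satisfies $x \in B$, $f(x) \in \partial C \subset \partial B$, and $f^2(x) \in B$, yielding the desired contradiction. For part (2), I would interpret ``sufficiently close'' as Hausdorff proximity of the sets and of their boundaries (automatic for small perturbations within a natural class of regular compact sets). If the conclusion failed, I would pick $B_n \to B$ in this sense together with tangencies $y_n \in B_n$, extract a convergent subsequence $y_n \to y^*$, and use closedness together with convergence of the sets to deduce $y^*, f(y^*), f^2(y^*) \in B$; convergence of the boundaries then forces $f(y^*) \in \partial B$, yielding the forbidden tangency for $B$.

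Part (3) is the substantive one. Set $B' = B \cap f^{-1}(B)$. Since $B$ is closed, $\partial(f^{-1}(B)) \subset f^{-1}(\partial B)$, and consequently
$$\partial B' \subset \partial B \cup f^{-1}(\partial B).$$
Suppose $y$ is an interior tangency for $B'$; unwinding $y \in B'$, $f(y) \in \partial B' \subset B'$, and $f^2(y) \in B'$ gives $y, f(y), f^2(y), f^3(y) \in B$. If $f(y) \in \partial B$, then $y$ itself is a tangency for $B$. If instead $f(y) \in f^{-1}(\partial B)$, then $f^2(y) \in \partial B$, while $f(y), f^3(y) \in B$, so $f(y)$ is a tangency for $B$. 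Either alternative contradicts the hypothesis.

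The main obstacle is the case-analysis in part (3), where one must carefully bookkeep four consecutive iterates and exploit the splitting of $\partial B'$ into two pieces; the identification of the tangency point of $B$ shifts from $y$ in the first case to $f(y)$ in the second. Parts (1) and (2) are softer, relying only on local connectedness, closedness, and compactness.
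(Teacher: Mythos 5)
Your proof is correct, and for part (3) it follows essentially the same route as the paper: the decomposition $\partial B' \subset \partial B \cup f^{-1}(\partial B)$ plays exactly the role of the paper's relative boundary $\partial_B A$ (the paper's key observation being that $\partial_B A \subset f^{-1}(\partial B)$), and your two cases mirror the paper's argument that a tangency for $B'$ would force one for $B$ at either $y$ or $f(y)$. The paper leaves parts (1) and (2) to the reader; your filled-in arguments for those are sound.
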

\begin{proof}
The first two statements are left to the reader. For the third one, denote $A = B \cap f^{-1}(B)$.
Since $A \subset B$ and $B$ is an isolating block, internal discrete tangencies can only hipotetically occur
in $\partial_B A$. However, any $x \in \partial_B A$ satisfies $f(x) \in \partial B$, hence $f^2(x) \notin B$
so $f(x) \notin A$. Therefore $f(\partial_B A) \cap A = \emptyset$ and, as a consequence, $A$ is an isolating block.
\end{proof}

Using the previous lemma we can always assume isolating blocks are regular and connected whenever
the invariant sets are connected.
This is the case in this work, where we will ask for some extra properties to these already ``nice'' isolating neighborhoods
to gain geometric control over their dynamics.

\subsection{Nilpotence}

The indices of fixed points will be computed by applying Lefschetz Theorem
after modifying the original map in the boundary of an isolating block.
The task would be simplified if we are allowed to make any assumption of the topology of the isolating block.
Unfortunately, no result in this direction is available in our setting as this question has only been
addressed for homeomorphisms (see \cite{lecalvezyoccozconley, rportalsalazar}).
The following lemma helps to circumvent this shortage, showing how the homological information is trivialized
by the dynamics in an appropriate setting.
It shares the same taste with the work of Richeson and Wiseman in \cite{richesonwiseman}.

\begin{lemma}\label{lem:nilpotent}
Let $S$ be a topological space, $r \ge 1$ and $g : S \to S$ a continuous map such that:
\begin{itemize}
\item[$(i)$] There exist compacts sets $Y, W \subset S$ and an integer $l \ge 0$ such that $g^l(W) \subset Y$.
\item[$(ii)$] The map induced by inclusion $H_r(Y) \to H_r(S)$ is trivial.
\item[$(iii)$] The map induced by inclusion $H_r(W) \to H_r(S)$ is surjective.
\end{itemize}
Then, the map $g_{*,r} : H_r(S) \to H_r(S)$ is nilpotent.
\end{lemma}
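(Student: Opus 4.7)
The plan is to show directly that $(g_{*,r})^l = 0$ on $H_r(S)$, which immediately implies nilpotence. The three hypotheses are tailored to provide exactly the factorization that kills any homology class after $l$ iterations, so the proof should reduce to a diagram chase.

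First, I would set up the relevant inclusion maps. Denote by $i_W : W \hookrightarrow S$ and $i_Y : Y \hookrightarrow S$ the inclusions, and let $h : W \to Y$ be the corestriction of $g^l|_W$, which is well defined by hypothesis $(i)$ since $g^l(W) \subset Y$. The key identity is then the functorial equality
$$g^l \circ i_W \;=\; i_Y \circ h \quad \text{as maps } W \to S.$$
Taking induced maps on $H_r$ and using functoriality gives
$$(g_{*,r})^l \circ (i_W)_{*,r} \;=\; (i_Y)_{*,r} \circ h_{*,r}.$$

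Now I would use the hypotheses. By $(ii)$, the right--hand side factors through the zero map $(i_Y)_{*,r} = 0$, so the composition on the left vanishes: $(g_{*,r})^l \circ (i_W)_{*,r} = 0$. By $(iii)$, the map $(i_W)_{*,r} : H_r(W) \to H_r(S)$ is surjective, so every element of $H_r(S)$ is of the form $(i_W)_{*,r}(\beta)$ for some $\beta \in H_r(W)$. Applying $(g_{*,r})^l$ to such an element gives $0$, hence $(g_{*,r})^l$ is the zero endomorphism of $H_r(S)$. This proves nilpotence.

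Honestly I do not anticipate a real obstacle: the statement is essentially a bookkeeping lemma in which the three hypotheses line up to produce a factorization $H_r(S) \twoheadrightarrow H_r(W) \to H_r(Y) \to H_r(S)$ of $(g_{*,r})^l$ through a map that is zero. The only point worth flagging is to apply functoriality in the correct direction (computing $(g^l)_{*,r}$ by composing $g^l$ with $i_W$ before passing to homology), so that hypothesis $(i)$ can be invoked set--theoretically rather than in homology. Once this is clear, no further argument is needed.
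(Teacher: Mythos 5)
Your proof is correct and follows essentially the same argument as the paper: the paper verifies $(g_{*,r})^l=0$ by tracking a representative $r$-chain from $W$ into $Y$ and invoking the same two hypotheses, which is just the chain-level rendition of your functorial diagram chase $H_r(S)\twoheadrightarrow H_r(W)\to H_r(Y)\to H_r(S)$. (As a side note, your hypothesis labels are in fact applied more carefully than in the paper's own proof, which cyclically misattributes $(i)$, $(ii)$, $(iii)$.)
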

\begin{figure}[htb]
\centering
\includegraphics[scale = 0.7]{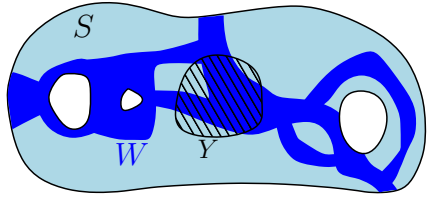}
\caption{Proof of Lemma \ref{lem:nilpotent}: The image of $W$ (in dark) by $f^l$ is contained in $D$ (filled with lines).}
\label{fig:nilpotenciaR2}
\end{figure}
\begin{proof}
We will show that $g^l_{*,r} : H_r(S) \rightarrow H_r(S)$ is the zero morphism.
Fix a class $\alpha \in H_r(S)$, which by $(i)$ is represented by a $r$-chain $\sigma$ contained in $W$.
The homology class $g^l_{*,r}(\alpha)$ is represented by the $r$-chain $g^l(\sigma)$,
which is contained in $g^l(W) \subset Y$ by $(ii)$.
We deduce from $(iii)$ that $g^l(\sigma)$ is a boundary in $S$. Therefore, $g^l_{*,r}(\alpha) = 0$.
\end{proof}

This lemma plays a fundamental role in the proof of Theorem \ref{thm:formula}.
The first hypothesis on the set $W$ will be automatically satisfied if it is a neighborhood
of the stable set of $g$ in $S$, $\Lambda^+(g, S)$, which consists of the set points whose forward orbit under $g$
is contained in $S$. Clearly,
$$\Lambda^+(g, S) = \bigcap_{n \ge 0}g^{-n}(S).$$

\subsection{Attractors}

Recall that $U$ is an open subset of a locally compact metric space $M$ and $f : U \to M$ is continuous.
\begin{definition}
An isolated invariant set $X$ will be called an \emph{attractor} provided there exists an isolating
neighborhood $N$ of $X$ such that
$$f(N) \subset N.$$
If the set $X = \{p\}$, we say that $p$ is an \emph{attracting} fixed point or a \emph{sink}.
If $X$ is the orbit of a periodic point $p$ we say that $p$ is an \emph{attracting} periodic point or a \emph{periodic sink}.
\end{definition}

Equation (\ref{eq:invN}) applied to such $N$ allows to give a simple formula for $X = \Inv(f, N)$:
\begin{equation}\label{eq:invsink}
\Inv(f, N) = \bigcap_{m \ge 0}f^m\left(\bigcap_{n \ge 0}f^{-n}(N)\right) =
\bigcap_{m \ge 0}f^m(\Lambda^+(f, N)) = \bigcap_{m \ge 0} f^m(N),
\end{equation}
where the last equality comes from the fact that $\Lambda^+(f, N) = N$ if $f(N) \subset N$.

\begin{remark}
The forward orbit of any point $x \in \partial N$ is eventually contained in $\int(N)$.
A small argument shows that we can slightly thicken $N$ to obtain an isolating neighborhood $N'$
of $X$ such that $f(N') \subset \int(N')$. Consequently, any attractor has an isolating neighborhood $N'$
such that
$$N' \text{ is regular }\enskip \text{ and } \enskip f(N') \cap \partial N' = \emptyset.$$
In particular, such $N'$ would be an isolating block.
\end{remark}

A direct application of Lemma \ref{lem:nilpotent} allows to compute indices of sinks.

\begin{proposition}\label{prop:sinks}
Assume $M = \mathds{R}^d$, $d \ge 1$.
The fixed point indices of a sink are $i(f^n, p) = 1$ for every $n \ge 1$.
\end{proposition}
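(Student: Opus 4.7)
The plan is to realize the index $i(f^n, p)$ as the Lefschetz number of $f^n$ restricted to a suitable isolating neighborhood, and show this number is $1$ by combining Lemma \ref{lem:nilpotent} with the attracting behaviour.

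By the Remark preceding the proposition, I can choose a regular isolating neighborhood $N$ of $p$ with $f(N) \cap \partial N = \emptyset$, hence $f(N) \subset \int(N)$. Replacing $N$ by its connected component containing $p$ (which is $f$-invariant, since $f(N)$ is connected and meets this component at $p$), I may assume $N$ is a connected, compact, regular $2$-submanifold, and $f^n(N) \subset \int(N)$ for every $n \ge 1$. From \eqref{eq:invsink} we have $\{p\} = \bigcap_{m \ge 0} f^m(N)$, so given any open disk $D$ around $p$ there exists $l \ge 1$ with $f^l(N) \subset D$.

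Next I apply Lemma \ref{lem:nilpotent} with $S = N$, $g = f|_N$, $W = N$, and $Y = f^l(N)$, to show that $g_{\ast,r} : H_r(N) \to H_r(N)$ is nilpotent for every $r \ge 1$. Hypothesis $(i)$ is immediate since $g^l(W) = Y$. Hypothesis $(iii)$ holds trivially because $W = S$. For $(ii)$, the inclusion $Y \hookrightarrow N$ factors through the contractible disk $D$, and $H_r(D) = 0$ for $r \ge 1$, so $H_r(Y) \to H_r(N)$ is the zero map. Therefore $f_{\ast,r}|_N$ is nilpotent, and the same holds for $(f^n)_{\ast,r}|_N = (f_{\ast,r}|_N)^n$.

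Finally, since $N$ is a compact ENR and $f^n(N) \subset \int(N)$, the Lefschetz fixed point formula applies and yields
$$L(f^n|_N) = \sum_{r \ge 0} (-1)^r \trace\bigl((f^n)_{\ast,r}\bigr) = \trace\bigl((f^n)_{\ast,0}\bigr) = 1,$$
using that $N$ is connected in degree $0$ and that nilpotent endomorphisms have trace zero in positive degrees. On the other hand, any fixed point of $f^n$ in $N$ has full $f$-orbit in $N$ (because $f(N) \subset N$), hence lies in $\Inv(f, N) = \{p\}$. By the standard additivity and commutativity properties of the fixed point index,
$$i(f^n, p) = i(f^n, N) = L(f^n|_N) = 1.$$
The main delicate point is verifying the three hypotheses of Lemma \ref{lem:nilpotent} with the right choices; the attracting dynamics makes them all automatic, so the argument is essentially immediate once $N$ is chosen well.
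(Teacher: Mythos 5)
Your proof is correct and follows essentially the same strategy as the paper: choose a regular $N$ with $f(N)\subset\int(N)$, iterate until $f^l(N)$ lies in a contractible disk $D$, apply Lemma~\ref{lem:nilpotent} (the paper uses $Y=D$ directly while you use $Y=f^l(N)$ and factor through $D$, a cosmetic difference), and conclude by the Lefschetz theorem. Passing to the connected component of $N$ containing $p$ is a tidy extra step that makes the $H_0$-trace equal to $1$ immediately, but otherwise the two arguments coincide.
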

\begin{proof}
It suffices to take a regular $N$ as in the definition of sink and $D \subset N$ any topological closed disk
containing $p$ in its interior. From $\Inv(f, N) = \{p\} \subset \int(D)$ and Equation (\ref{eq:invsink})
 we get that $$f^l(N) = \bigcap_{m = 0}^{l}f^m(N) \subset D$$
for sufficiently large $l$. Lefschetz Theorem together with Lemma \ref{lem:nilpotent}
applied to $l$ and $S = W = N$ and $Y = D$ then yield the result.
\end{proof}

\subsection{Repellers}

The concept of repeller for non--invertible maps is not completely clear.
The existence of a backward invariant neighborhood is too restrictive in this setting, as
it no longer shares a local flavour.
Here, we try to keep our discussion as general as possible.

\begin{definition}\label{def:repeller}
An isolated invariant set $X$ will be called a \emph{weakly repeller} if there exists an isolating neighborhood $N$ of $X$
such that
\begin{enumerate}
\item[$(i)$] $f(\partial N) \cap \int(N) = \emptyset$.
\end{enumerate}
If, in addition, it satisfies
\begin{enumerate}
\item[$(ii)$] The forward orbit of every $x \in N \setminus X$ eventually exits $N$.
\end{enumerate}
then $X$ will be called a \emph{repeller}.
If the set $X = \{p\}$ is a repeller, we say that $p$ is a \emph{repelling} fixed point or a \emph{source}.
If $X$ is the periodic orbit of a periodic point $p$,
we say that $p$ is a \emph{repelling} periodic point or a \emph{periodic source}.
\end{definition}


\begin{lemma}
Property $(ii)$ is equivalent to:

\center{For every $x \in \Lambda^+(f, N)$, $f^{-1}(x) \cap N \neq \emptyset$.}
\end{lemma}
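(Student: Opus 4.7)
The plan is to prove the two implications separately, exploiting in each direction the characterization of $\Inv(f, N)$ as the set of points admitting a full two--sided solution contained in $N$.

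First, I would show that $(ii)$ implies the stated preimage property. Assume $(ii)$ and pick $x \in \Lambda^+(f, N) = \bigcap_{n \ge 0} f^{-n}(N)$. Note that $x \in N$, and the whole forward orbit of $x$ stays in $N$. Property $(ii)$ then forces $x \in X$, for otherwise the forward orbit of $x$ would have to exit $N$. Since $X = \Inv(f, N)$ satisfies $f(X) = X$ and $X \subset N$, there exists $y \in X \subset N$ with $f(y) = x$, so $f^{-1}(x) \cap N \neq \emptyset$.

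For the converse, assume the preimage property and take any $x \in N \setminus X$ whose forward orbit is entirely contained in $N$; I want to derive a contradiction. By hypothesis, $x \in \Lambda^+(f, N)$, so there exists $x_{-1} \in N$ with $f(x_{-1}) = x$. The forward orbit of $x_{-1}$ is $x_{-1}, x, f(x), f^2(x), \ldots$, all in $N$, hence $x_{-1} \in \Lambda^+(f, N)$. Iterating this construction yields inductively a sequence $\{x_{-n}\}_{n \ge 1} \subset N$ with $f(x_{-n}) = x_{-n+1}$. Splicing this sequence with the forward orbit of $x$ produces a full solution through $x$ contained in $N$, so $x \in \Inv(f, N) = X$, contradicting $x \notin X$.

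I do not expect any serious obstacle: the argument is essentially bookkeeping with the definitions of $\Lambda^+(f, N)$ and $\Inv(f, N)$ together with formula (\ref{eq:invN}). The only point that requires mild attention is making sure, in the forward implication, that the possibility $x \in \Lambda^+(f, N) \setminus X$ is genuinely ruled out by $(ii)$, so that one can legitimately apply $f(X) = X$ to obtain the desired preimage inside $N$.
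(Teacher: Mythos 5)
Your proof is correct and takes essentially the same route as the paper: both directions hinge on observing that $(ii)$ is the same as $X = \Lambda^+(f, N)$, using $f(X) = X$ for the forward implication and an inductive construction of a backward orbit in $N$ for the converse. The only cosmetic difference is that you phrase the converse as a proof by contradiction rather than directly showing $\Lambda^+(f,N) \subset X$.
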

\begin{proof}
Clearly, $(ii)$ is equivalent to $X = \Lambda^+(f, N)$ so $(\Rightarrow)$ is obvious.
To prove $(\Leftarrow)$ take $x_0 \in \Lambda^+(f, N)$. By hypothesis, we can define a solution $\{x_n\}_{-\infty}^{\infty}$
contained in $N$ through $x_0$, hence $x_0 \in X$ and only the points of $X$ have their forward orbit contained in $N$.
\end{proof}

%

\begin{remark}\label{rmk:nopreimage}
Notice the dichotomy for a weakly repelling fixed point $p$: either it is repelling or
the isolating neighborhood $N$ in the definition contains a point $x$ which has no preimage in $N$
and whose forward orbit tends to $p$.
\end{remark}

\begin{remark}\label{rmk:regularrepeller}
For weakly repellers the forward orbit of any point in the boundary of $N$ eventually exits $N$.
Therefore, after suitably trimming the boundary of $N$ we obtain an isolating neighborhood $N'$ of $X$
such that $f(\partial N') \cap N' = \emptyset$ (equivalently, $\partial N' \cap f^{-1}(N') = \emptyset$).
As a consequence, in Definition \ref{def:repeller} it is possible to replace $(i)$ by
\begin{center}
$(i')$ $N$ is regular \enskip and \enskip $f(\partial N) \cap N = \emptyset$.
\end{center}
\end{remark}


The computation of fixed point indices of sources seems to be more involved.
Here, we present an argument valid for the planar case.

\begin{proposition}\label{prop:sources}
Let $U$ be an open subset of the plane, $f : U \to f(U) \subset \mathds{R}^2$
and $p$ a source.
Then, there exists an integer $d$ such that $i(f^n, p) = d^n$ for any $n \ge 1$.
\end{proposition}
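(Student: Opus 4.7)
The plan is to apply the Lefschetz fixed point formula to a continuous self-map of the quotient space $S = N/\partial N$, where $N$ is an isolating block for $p$.

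First, using Definition \ref{def:repeller}, Remark \ref{rmk:regularrepeller} and Lemma \ref{lem:isolatingblocks}, I would take a connected regular isolating block $N$ for $\{p\}$ with $f(\partial N) \cap N = \emptyset$ and $\Lambda^+(f, N) = \{p\}$. As a connected regular planar compact set, $N$ is a topological disk with $k \ge 0$ holes, so $S = N/\partial N$ is a compact CW complex with $H_0(S) \cong H_2(S) \cong \mathds{Q}$ and $H_1(S) \cong H_1(N, \partial N) \cong \mathds{Q}^k$ by Lefschetz duality. I define $\hat f : S \to S$ to agree with $f$ on $\{x \in \int(N) : f(x) \in \int(N)\}$, to send every other point of $S$ to the basepoint $*$ (the image of $\partial N$), and to fix $*$; the condition $f(\partial N) \cap N = \emptyset$ makes $\hat f$ continuous. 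Every fixed point of $\hat f^n$ distinct from $*$ must lie in $\Lambda^+(f, N) = \{p\}$, so $\Fix(\hat f^n) = \{p, *\}$ for every $n$. Since $\hat f$ coincides with $f$ near $p$, $i(\hat f^n, p) = i(f^n, p)$; since $\hat f$ is locally constant at $*$ (a neighborhood of $*$ is collapsed onto $\{*\}$), $i(\hat f^n, *) = 1$.

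The crucial step is to show that $\hat f_{*, 1}$ is nilpotent, which I would do by applying Lemma \ref{lem:nilpotent} with $r = 1$. I choose $k$ arcs $\alpha_1, \dots, \alpha_k$ in $N$, each joining two distinct components of $\partial N$ and avoiding $p$; in $S$ each $\alpha_i$ becomes a loop at $*$, and their classes form a basis of $H_1(S)$. Set $W = \{*\} \cup \bigcup_i \alpha_i$ and $Y = \{*\}$: then the inclusion induced map $H_1(W) \to H_1(S)$ is surjective while $H_1(Y) \to H_1(S)$ is zero. On each compact $\alpha_i \subset N \setminus \{p\}$ the first exit time $\tau(x) = \min\{n \ge 1 : f^n(x) \notin N\}$ is finite, since $\Lambda^+(f, N) = \{p\}$, and upper semicontinuous, hence bounded by some $T_i$. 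Taking $l = \max_i T_i$ we get $\hat f^l(W) = \{*\} \subset Y$, so Lemma \ref{lem:nilpotent} yields nilpotence of $\hat f_{*, 1}$; in particular $\trace \hat f^n_{*, 1} = 0$ for every $n \ge 1$.

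Finally, setting $d := \trace \hat f_{*, 2} \in \mathds{Z}$ (an integer because $H_2(S; \mathds{Z}) \cong \mathds{Z}$), the Lefschetz fixed point formula gives $L(\hat f^n) = 1 - 0 + d^n = 1 + d^n$, while summing local indices gives $L(\hat f^n) = i(f^n, p) + 1$; hence $i(f^n, p) = d^n$ for every $n \ge 1$. The main technical obstacle I anticipate is verifying the index computation $i(\hat f^n, *) = 1$ on the possibly non-manifold complex $S$ (when $k \ge 1$); this follows from $\hat f$ being locally a constant map at $*$, which has Lefschetz number $1$ and $*$ as its unique fixed point.
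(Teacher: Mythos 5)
Your proof is correct, but it takes a genuinely different route from the paper's. The paper collapses each boundary circle of $N$ to a \emph{distinct} point, so the quotient $\widehat N$ is a $2$-sphere: $H_1(\widehat N)=0$ makes the Lefschetz number trivially $1+\deg(\widehat f)^n$, but one must then show that the induced boundary dynamics on the finite set $F'$ of collapsed circles has a \emph{unique} attracting periodic point, and that this point is fixed; the paper does this by a connectedness argument on basins of attraction in $\widehat N\setminus\{\widehat p\}$, using property $(ii)$ of the definition of source. You instead collapse \emph{all} of $\partial N$ to a single point $*$, producing a non-manifold wedge $S \simeq S^2\vee\bigvee^k S^1$; this makes the boundary contribution trivial (one fixed point $*$ of index $1$, no basin analysis needed), but $H_1(S)\cong\mathds{Q}^k$ is no longer zero, so you must appeal to Lemma \ref{lem:nilpotent} with $W$ the wedge of arcs joining distinct boundary circles and $Y=\{*\}$, using $\Lambda^+(f,N)=\{p\}$ to bound the exit times. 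In effect you have traded the paper's elementary basin argument for the nilpotence lemma; your version is more uniform with the proof of Theorem \ref{thm:formula} (which uses Lemma \ref{lem:nilpotent} in exactly this spirit), while the paper's version keeps Proposition \ref{prop:sources} independent of that lemma. One small caveat: when defining $\hat f$ one should say $\hat f(\pi(x))=\pi(f(x))$ (rather than ``agrees with $f$''), and continuity at points $x$ with $f(x)\in\partial N$ holds because $\pi(f(x))=*$ and $\pi$ is continuous — worth spelling out, since this is precisely the place where collapsing everything to a single point simplifies the well-definedness check that the paper has to make (each $f(\lambda)$ lands in a single complementary component).
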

\begin{proof}
Take a regular neighborhood $N$ of $p$ given in Remark \ref{rmk:regularrepeller}.
Recall that $f(\partial N)$ lies outside $N$.
Now, identify each connected component of $\partial N$ to a different point and denote $\widehat N$
the quotient space.
Note that $\widehat N$ is homeomorphic to the 2-sphere.
Since each $\lambda \in \pi_0(\partial N)$ is a Jordan curve, it bounds an open disk
which does not meet $N$.
The map $f$ naturally induces a map $\widehat f : \widehat N \to \widehat N$ for if $f(x) \notin \int(N)$ then
$\widehat f(x)$ is defined as the point which represents
the connected component of $\partial N$ which bounds the closed disk of $S^2 \setminus \int(N)$
to which $f(x)$ belongs.
Since every closed disk of $S^2 \setminus \int(N)$ is sent inside a unique connected component of $S^2 \setminus N$,
the map $\widehat f$ is well-defined and continuous.
Clearly, $f$ is conjugate to $\widehat f$ in a neighborhood of $p$ and $\widehat p$, the fixed point
of $\widehat f$ corresponding to $p$, respectively. This implies $i(f^n, p) = i(\widehat f^n, \widehat p)$, for every $n \ge 1$.
Moreover, the set $F' = \{q_1,\ldots,q_m\}$ composed of the points which correspond to the connected
components of $\partial N$ is a local attractor. In fact, $\widehat f$
is locally constant in a neighborhood of $F'$.

Let $F = \Inv(\widehat f, F')$ and $k$ such that $\widehat f^k$ is the identity map restricted to $F$.
For any $q_i \in F$, the basin of attraction of $q_i$ for $\widehat f^k$, that is the set of points in $\widehat N$
whose forward orbit under $\widehat f^k$ tends to $q_i$, is an open proper subset of $\widehat N$ by
definition of $F$ and $k$.
From property $(ii)$ of the definition of source we deduce that any point in $\widehat N \setminus \{\widehat p\}$ belongs to
the basin of attraction of some $q_i$ for $\widehat f^k$. Thus, the set of basin of attractions are a partition into
non--empty open subsets of the connected set $\widehat N \setminus \{\widehat p\}$. It then follows that $F$ is composed of only one element, $q$, a sink of $\widehat f$.

Applying Lefschetz Theorem yields that
$$1 + i(f^n, p) = i(\widehat f^n, q) + i(\widehat f^n, \widehat p) = i(\widehat f^n) =
\Lambda(\widehat f^n) = 1 + \deg(\widehat f)^n,$$
where $\deg(\widehat f)$ denotes the degree of the map $\widehat f : \widehat N \to \widehat N$ as a map of the 2-sphere,
and $\Lambda(\widehat f^n)$ denotes the Lefschetz number of $\widehat f^n$.
The conclusion trivially follows.
\end{proof}


\section{Proof of Theorem \ref{thm:formula}: Characterization of fixed point index sequences}

The plan of the proof is as follows. The first goal is to construct a suitable isolating block $B$
 for $\{p\}$ which makes it possible to encapsulate the dynamics, much alike as in Conley index theory.
The definition is given in Step I and some properties are proved in Step II.
 The main difficulty is that in our case
we need to identify each connected component of the exit set of the isolating neighborhood to a different point
so that we do not lose the information on the way any orbit exit $B$.
After the identification we obtain in Step III a space $\widehat B$ equipped with an induced dynamics given by $\widehat f$
in which computations will be carried out. The set of way outs of $B$ and their dynamics
will be then represented by attracting periodic orbits by $\widehat f$ lying in the boundary of $\widehat B$.
The original choice of $B$ proves crucial so that we can apply Lemma \ref{lem:nilpotent} to
$\widehat f$ and $\widehat B$ in Step IV to compute the index.

\medskip
\emph{Step I: Definition of suitable isolating blocks $B \subset B' \subset B_0$.}

Start with a regular connected isolating block $B_0$ of $p$.
By Lemma \ref{lem:isolatingblocks}, the set $A = B_0 \cap f^{-1}(B_0)$ is an isolating block
an so it is the connected component of $A$ that contains $p$, denoted $A_0$.
We have that $f(\partial_{B_0}A_0) \cap A_0 = \emptyset$. Using again Lemma \ref{lem:isolatingblocks},
there exists a regular connected isolating block $B_1$ of $p$ such that
$B_1 \subset A$, $\overline{B_0 \setminus B_1}$ is regular and
$f(\partial_{B_0}B_1) \cap B_1 = \emptyset$. By induction we can find a nested sequence
$\{B_n\}_{n \ge 0}$ which satisfies:
\begin{itemize}
\item $B_{n+1}$ is a regular connected isolating block of $p$ and $\overline{B_n \setminus B_{n+1}}$ is also regular.
\item $f(B_{n+1}) \cup B_{n+1} \subset B_n$.
\item $\partial_{B_n}B_{n+1} = \partial_{B_0}B_{n+1}$ and $f(\partial_{B_0}B_{n+1}) \cap B_{n+1} = \emptyset$.
\end{itemize}


Moreover, $\partial_{B_0}B_n$ is always non--empty for $n \ge 1$. Otherwise, $\partial_{B_0}B_n = \partial_{B_{n-1}}B_n = \emptyset$
so $f(B_{n-1}) \subset B_{n-1}$ and it follows that $p$ is a sink.

Consider, for any $n \ge 1$, the map induced by the inclusion $B_n \subset B_0$
$$\iota_n : \pi_0(S^2 \setminus B_0) \to \pi_0(S^2 \setminus B_n).$$
Since $B_0$ is regular, the set of connected components of its complement is finite and
there exists $m$ so that the cardinality of the image of the maps $\iota_n$ for $n \ge m$
is equal (and minimal).

Denote $B' = B_m, B = B_{m+1}$.

\medskip
\emph{Step II: Properties satisfied by $B, B'$ and $B_0$.}

\begin{lemma}\label{lem:uniqueccR2}
Any connected component $C$ of $\overline{B' \setminus B}$ contains exactly
one connected component of $\partial_{B_0}B$.
\end{lemma}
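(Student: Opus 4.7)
The lemma is essentially a topological statement using only that $B$ and $B'$ are connected together with the nesting $B\subset\int(B')$; the minimality condition defining $m$ is not invoked at this stage. The plan is to reduce the claim to the familiar fact that, for a regular connected planar set, the boundary Jordan curves are in bijection with the complementary disks.

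First I would extract from the construction of $\{B_n\}$ that the condition $\partial_{B_n}B_{n+1}=\partial_{B_0}B_{n+1}$ prevents $\partial B_{n+1}$ from touching $\partial B_n$, so that $B\subset\int(B')\subset\int(B_0)$. Consequently $\partial_{B_0}B=\partial_{B'}B=\partial B$, and its connected components are Jordan curves in $S^2$. Because $B$ is connected and regular, writing $S^2\setminus B=\bigsqcup_{e}E_e$ with each $E_e$ an open topological disk bounded by a single Jordan curve $\gamma_e$ of $\partial B$ identifies $\pi_0(\partial B)$ bijectively with $\pi_0(S^2\setminus B)$.

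For the \emph{at most one} direction, let $C$ be a connected component of $\overline{B'\setminus B}$. Regularity makes $C$ a compact connected 2-manifold with boundary, whose boundary Jordan curves are drawn from $\partial B\cup\partial B'$; its topological interior $\int(C)$ is then open, connected, and contained in $B'\setminus B\subset S^2\setminus B$, because any point in $\int(C)$ has a neighborhood inside $C$ which must avoid both $\partial B'$ and $\partial B$. Connectedness of $\int(C)$ places it inside a single $E_e$, so $C=\overline{\int(C)}\subset E_e\cup\gamma_e$; any component of $\partial B$ sitting in $C$ must then lie in $\gamma_e$ and hence equal $\gamma_e$.

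For the \emph{at least one} direction, suppose $C$ contains no component of $\partial B$. Since $\overline{B'\setminus B}\cap B=\partial_{B'}B=\partial B$, this forces $C\cap B=\emptyset$, and because $\overline{B'\setminus B}$ has only finitely many components, the complementary union $\overline{B'\setminus B}\setminus C$ is closed. Decomposing $B'=C\sqcup\bigl(B\cup(\overline{B'\setminus B}\setminus C)\bigr)$ as a disjoint union of two nonempty closed subsets contradicts the connectedness of $B'$. The only mild obstacle I anticipate is bookkeeping among the three candidate notions of boundary ($\partial B$, $\partial_{B'}B$, and $\partial_{B_0}B$), all of which agree once $B\subset\int(B')$ is in hand.
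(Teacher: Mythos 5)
Your first reduction is incorrect, and the error propagates through the whole argument. The condition $\partial_{B_n}B_{n+1}=\partial_{B_0}B_{n+1}$ does \emph{not} force $B_{n+1}\subset\int(B_n)$; it only rules out $\partial B_{n+1}$ touching $\partial B_n$ away from $\partial B_0$. The construction explicitly allows $\partial B$ to meet $\partial B_0$ (and hence $\partial B'$) along arcs, in which case the components of $\partial_{B_0}B$ are Jordan \emph{arcs}, not Jordan curves. Indeed the paper's own proof treats exactly this case: it observes that if two components of $\partial_{B_0}B$ sit in one $C$, they must be arcs, and then uses their endpoints on $\partial B_0$. Once the components are arcs, your identification of $\pi_0(\partial_{B_0}B)$ with $\pi_0(S^2\setminus B)$ collapses, and the ``each $\int(C)$ lies in a single complementary disk $E_e$'' step no longer delivers the conclusion.

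The deeper problem is your opening assertion that the minimality of $m$ is not used. It is essential, and without it the lemma is simply false. For a concrete failure: take $B_0$ a closed disk with two disjoint round holes $D_1,D_2$, set $B'=B_0$, and let $B=B_0\setminus D$ where $D$ is an open ``dumbbell'' region in the original disk containing $D_1\cup D_2$ joined by a thin corridor. All your hypotheses hold: $B\subset B'$, both connected and regular, and $\partial_{B'}B=\partial_{B_0}B$ trivially. But $\partial_{B_0}B$ consists of the two corridor sides (two arcs), and $\overline{B'\setminus B}$ is the single connected closed corridor, which contains both of them. This does not contradict the paper because here $\#\,\mathrm{im}(\iota_{m+1})=2<3=\#\,\mathrm{im}(\iota_m)$, i.e.\ $m$ was not chosen at the point where the cardinalities stabilize. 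The paper's proof uses precisely this mechanism: from two arcs $\lambda,\lambda'$ in $C$ it builds a Jordan curve $\gamma\subset B\cup C\subset B'$ separating two components $H,H'$ of $S^2\setminus B_0$ whose images under $\iota_m$ differ, while a path near $\lambda$ avoiding $B$ shows $\iota_{m+1}(H)=\iota_{m+1}(H')$, contradicting the stabilization defining $m$. Any correct proof must bring that minimality into play.
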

\begin{proof}
Since $B'$ is connected and contains $B$ it is immediate to check that $\partial_{B_0}B = \partial_{B'}B$ meets $C$.
Suppose there are two different components $\lambda, \lambda' \in \pi_0(\partial_{B_0}B)$ contained in $C$.
Observe that none of them can be a Jordan curve because $B$ is connected. Consequently,
$\lambda$ and $\lambda'$ are Jordan arcs.
Take a path in $B$ joining $\lambda$ and $\lambda'$ and extend it to a Jordan curve $\gamma \subset
B \cup C$, which we can assume only meets once both $\lambda$ and $\lambda'$.
Since any end point of a Jordan arc in $\pi_0(\partial_{B_0}B)$
belongs to the adherence of a connected component of $S^2 \setminus B_0$,
one deduces that there exist connected components $H, H'$ of $S^2 \setminus B_0$
which lie in different components of $S^2 \setminus \gamma$.
Assume each of the adherences of $H$ and $H'$ contains one end point of $\lambda$.
As $\gamma \subset B'$, $H$ and $H'$ are separated by $\gamma$ hence
 lie in different connected components of $S^2 \setminus B'$ and, in particular, $H \neq H'$.
However, it is clear that it is possible to connect $H$ and $H'$ by a path, close to the arc $\lambda$, which does not meet $B$.
Thus, $\iota_m(H) \neq \iota_m(H')$ but $\iota_{m+1}(H) = \iota_{m+1}(H')$, which contradicts the definition of $m$.
\begin{figure}[htb]
\centering
\includegraphics[scale = 0.7]{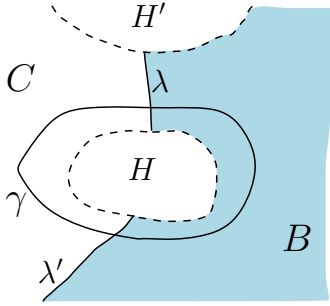}
\caption{Proof of Lemma \ref{lem:uniqueccR2}.}
\label{fig:uniqueccR2}
\end{figure}
\end{proof}

The previous lemma allows to define a map $\Phi : \overline{B' \setminus B} \to \pi_0(\partial_{B_0}B)$
which sends each point $x$ that belongs to a connected component $C$ of $\overline{B' \setminus B}$,
to the unique connected component of $\partial_{B_0}B$ which is contained in $C$.
The existence of $\Phi$ is essential to show that $f$ induces a map $\widehat f$ in the quotient space
$\widehat B$ which will be later constructed.

Next, consider a closed topological disk $D \subset \int(B)$ containing $p$.
A compactness argument shows that $f^l(\Lambda^+(f, B)) \subset \int(D)$.
Then, take $V$ a regular compact neighborhood of the stable set $\Lambda^+(f, B)$ in $B$ so that $f^l(V) \subset D$.
There is a property of $V$ arising from the definition of $m$.
From the definition of the sequence of isolating blocks we obtain that $B_{m+k} \subset V$
for sufficiently large $k\ge 1$. Since the map
$$\iota_V : \pi_0(S^2 \setminus B_0) \to \pi_0(S^2 \setminus V)$$
is then a factor of $\iota_{m+k}$, we have that the cardinality of the images of $\iota_V$,
$\iota_{m+k}$ and hence of $\iota_{m+1}$ are equal.
Rephrasing the previous sentence using duality, the images of the maps
$$H_1(V) \to H_1(B_0) \enskip \enskip \text{and} \enskip \enskip H_1(B) \to H_1(B_0)$$
are equal.

\medskip
\emph{Step III: Encapsulating the local dynamics. Definition of $\widehat B$ and $\widehat f: \widehat B \to \widehat B$.}

Now, we identify each connected component of the relative boundary $\partial_{B_0}B$ to a different point.
Denote the projection map $\pi: B \to \widehat{B}$. 
Observe that there are mainly two cases, $\partial_{B_0}B \neq \partial B$ and $\partial_{B_0}B = \partial B$.
In the first case $\widehat B$ is homeomorphic to a regular subset of the plane whereas in the second
$\widehat B$ is homeomorphic to $S^2$.

Since $V \cap \partial_{B_0}B = \emptyset$, the sets $\widehat{V} := \pi(V)$ and $V$ are homeomorphic. The image of any
$\lambda \in \pi_0(\partial_{B_0}B)$ under $\pi$ is a single point denoted $q_{\lambda}$.

First, we show that the map $H_1(\widehat{V}) \to H_1(\widehat{B})$ is surjective. The group $H_1(B)$
is generated by classes $[\lambda]$ represented by components $\lambda \in \pi_0(\partial B)$
seen as 1--chains. Similarly, $H_1(\widehat{B})$ is generated by classes $[\pi(\lambda)]$ such that
$\lambda \in \pi_0(\partial B)$ and $\lambda$ is not contained in $\partial_{B_0}B$ (otherwise $\pi(\lambda)$
is a point in the interior of $\widehat B$).

\textit{Claim:} The kernel of the inclusion-induced map $H_1(B) \to H_1(B_0)$ is contained
in the subspace $L$ generated by the classes $[\lambda]$ represented by connected components
$\lambda$ of $\partial B$ which are completely contained in $\partial_{B_0}B$.

The claim follows from this fact: given any topological closed disk $E \subset \int(B_0)$ with $\partial E \subset B$,
 $\partial E$ is homologous, as a chain, to the oriented sum of the boundaries of the disks
$E \setminus B$ and all these boundaries are contained in $\partial_{B_0}B$.

Since $L$ is contained in the kernel of the projection $\pi_{*,1} : H_1(B) \to H_1(\widehat{B})$,
we deduce that the inclusion-induced map $H_1(\widehat{V}) \to H_1(\widehat{B})$ is surjective.

Finally, we transfer the dynamics in $B$ onto $\widehat{B}$.
Recall that $f(\partial_{B_0} B) \cap B = \emptyset$ and $f(B) \cup B \subset B'$.
Define $\widehat{f} : \widehat{B} \to \widehat{B}$ as follows.
For a point $x \in B \setminus \partial_{B_0} B$, if $f(x) \in B \setminus \partial_{B_0} B$
then $\widehat f(\pi(x)) := \pi(f(x))$ and, otherwise, $\widehat f(x) = q_{\lambda}$
provided that $\lambda = \Phi(f(x))$. Finally, $\widehat f(q_{\lambda}) := q_{\lambda'}$ if
for any (all) $x \in \lambda$, $\Phi(f(x)) = \lambda'$.
Note that $\widehat f$ is locally constant around each $q_{\lambda}$
and, more precisely, it is locally constant in $\pi(B \cap f^{-1}(B_0 \setminus B))$,
which is an open neighborhood of $Q =  \{q_{\lambda}: \lambda \in \pi_0(\partial_{B_0} B)\}$ in $\widehat B$.
It follows that $\widehat f$ is continuous.

\begin{figure}[htb]
\centering
\includegraphics[scale = 0.6]{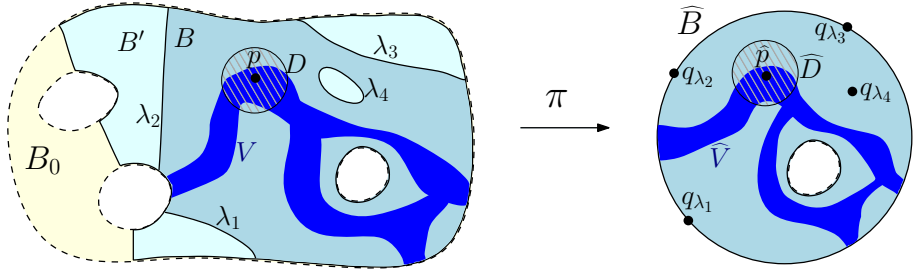}
\caption{Proof of Theorem \ref{thm:formula}.}
\label{fig:proofR2}
\end{figure}

\medskip
\emph{Step IV: Computation of the index through the map $\widehat f$.}

As $D$ does not meet $\partial_{B_0}B$, $D$ is homeomorphic to $\widehat D = \pi(D)$ and we get
$$\widehat f^l (\widehat V) \subset \widehat D.$$
Thus, we can apply Lemma \ref{lem:nilpotent} to $S = \widehat B$, $r = 1$, $W = \widehat V$, $Y = \widehat D$ and the integer $l$.
Lemma \ref{lem:nilpotent} then yields that the map $\widehat f_{*,1} : H_1(\widehat B) \to H_1(\widehat B)$
is nilpotent, so the trace of $\widehat f^n_{*,1}$ is 0 for any $n \ge 1$.

Higher homology groups are trivial except when $\widehat B$ is homeomorphic to $S^2$. In this case
$\partial B = \partial_{B'} B$, hence $f(\partial B) \cap B = \emptyset$ and $p$ is a weakly repelling fixed point.
However, since $p$ is not repelling it follows from Remark \ref{rmk:nopreimage} that there is a point $\hat{x} \in
\widehat B$ such that $\widehat{f}^{-1}(x) = \emptyset$. In particular, $\widehat f$ is not surjective and so
its degree, as a map between 2-spheres, is 0.

The previous discussion proves that the Lefschetz number of $\widehat f^n$ is equal to 1 and by Lefschetz Theorem
we conclude that
$$i(\widehat f^n, \widehat B) = 1.$$

The invariant set of $\widehat f$ is contained in the set $\{\widehat p\} \cup Q$.
Denote $\varphi$ the restriction of $\widehat f$ to $Q$.
If $q_{\lambda}$ is a fixed point for $\widehat f^n$ then $i(\widehat f^n, q_{\lambda}) = 1$ because
$\widehat f$ is locally constant at $q_{\lambda}$.
Note additionally that, since $\widehat f$ and $f$ are conjugate in a neighborhood $\widehat p$ and $p$, respectively,
we have $i(\widehat f^n, \widehat p) = i(f^n, p)$.
Therefore,
$$1 = i(\widehat f^n, \widehat B) = i(\widehat f^n, \widehat p) +
\sum_{q_{\lambda} \in Fix(\varphi^n)} i(\widehat f^n, q_{\lambda}) = i(f^n, p) + \#\Fix(\varphi^n).$$
If we denote $a_k$ the number of $k$-periodic orbits of $\varphi$
and $F = \{k \in \mathds{N} : a_k \ge 1\}$, we obtain the desired formula,
$$\{i(f^n, p)\}_{n \ge 1} =\sigma^1 - \sum_{k \in F} a_k \sigma^k.$$

\hfill \qed

\begin{remark}
In the final lines of the proof we have used that for a map $g : S \to S$ the index of a fixed point
$p \in \partial S$ is well--defined provided $S$ is an ENR. Furthermore, the index computations can be carried out
as if the map were defined in a neighborhood of $p$. We refer to \cite{marzantowicz} for a complete account on
fixed point index theory.
\end{remark}

\section{Realization of all possible sequences in Theorem \ref{thm:formula}}

In this subsection we construct examples which realize all possible fixed point index sequences
obtained in Theorem \ref{thm:formula}.

Let $F$ be an arbitrary finite subset of $\mathds{N}$ and let $a_k$ be an arbitrary positive integer for every $k \in F$.
We will define a map $f : \mathds{R}^2 \to \mathds{R}^2$ with a fixed point $p$ isolated as an invariant set and
such that
\begin{equation}\label{eq:formula}
\{i(f^n, p)\}_{n \ge 1} = \sigma^1 - \sum_{k \in F} a_k \sigma^k.
\end{equation}

Consider a continuous map $h : S^1 \to S^1$ with exactly $a_k$ attracting periodic orbits of period $k$
for every $k \in F$. Denote $P$ the set of points belonging to these orbits.
Notice that there is no obstruction in the definition of the map $h$ as we are only asking for continuity.
Take a compact neighborhood $V \subset S^1$ of $P$ such that $h(V) \subset \int(V)$ and $\Inv(h, V) = P$.

Let $g : S^1 \to \mathds{R}$ be a continuous map such that
$$g(\theta) \ge 0 \Leftrightarrow \theta \in V$$
and $g(\theta) > 0$ for $\theta \in \int(V)$. Consider the cylinder $S^1 \times \mathds{R}$
and compactify the end of the lower semi-cylinder $S^1 \times (-\infty, 0]$ with the point $\{e^-\}$.
We obtain a closed topological disk, which will be denoted $D_0$.
Similarly, we define disks $D_r$ for any $r \in \mathds{R}$.
Let $s : S^1 \times \mathds{R}\to S^1 \times \mathds{R}$ be the retraction from the cylinder onto $D_0$ defined
by $s(\theta, r) = (\theta, 0)$ if $r \ge 0$ and $s(\theta, r) = (\theta, r)$ otherwise.
Define $f : S^1 \times \mathds{R} \to S^1 \times \mathds{R}$ by
$$f((\theta, r)) = s(h(\theta), r + g(\theta)).$$
Fixing the lower end $e^-$ we obtain a continuous extension of $f$ to $S^1 \times \mathds{R} \cup \{e^-\}$.

For every $r \le 0$, the disk $D_{r}$ is an isolating block because the following property is satisfied:
$$g(\theta) \ge 0 \Rightarrow g(h(\theta)) > 0.$$
By a compactness argument, it implies that, for some $\varepsilon > 0$,
$$g(\theta) \ge - \varepsilon \Rightarrow g(h(\theta)) \ge \varepsilon.$$
Therefore, for any point $x \in D_0$ either $f^n(x)$ tends to $e^-$ as $n\rightarrow +\infty$
or $f^n(x)$ eventually belongs to $\partial D_0 = S^1 \times \{0\}$.
The same conclusion trivially holds for any point $x$ in the upper semi-cylinder.
We deduce that $\Inv(f, S^1 \times \mathds{R} \cup \{e^-\})$ is composed of a fixed point, $e^-$, and the
set $P \times \{0\}$ of attracting periodic orbits in $\partial D_0$.

The computation of the fixed point index of the point $e^-$ is now straightforward.
Take $D_1$ as the ambient space for the computation, it satisfies $f(D_1) \subset D_1$ and, clearly,
$\Inv(f, D_1) = \{e^-\} \cup (P \times \{0\})$.
The Lefschetz number of the map $f^n_{|D_1} : D_1 \to D_1$ is 1 because $D_1$ is a disk.
Consequently, the total sum of the local fixed point indices of the map $f^n_{|D_1}$ is equal to 1.
A point $\theta \in P$ is fixed by $h^n$ if and only if $(\theta, 0)$ is fixed by $f^n$. Additionally, a fixed point for the map $f^n$ of the form
$(\theta, 0)$ is attracting because $g$ is strictly positive in a neighborhood of $\theta$,
so its fixed point index is equal to 1.
Consequently, we have
$$i(f^n, e^-) = 1 - \sum_{h^n(\theta) = \theta \in P} i(f^n, (\theta, 0)) = 1 - \sum_{k | n} k \cdot a_k$$
which evidently yields Equation (\ref{eq:formula}) for $p = e^-$. \qed


\section{Non--isolated case: unbounded sequence of indices}

The purpose of this section is to show that the hypothesis of isolation as an invariant set in Theorem \ref{thm:formula}
is essential. We construct an example of a continuous map $f : \mathds{R}^2 \to \mathds{R}^2$
with a fixed point $p$ not isolated as an invariant set but isolated among the set of periodic points of $f$ and such that
the sequence $\{i(f^n, p)\}_{n \ge 1}$ is not bounded.

Before we start with the construction of $f$, let us include the following result:

\begin{theorem}[Babenko - Bogatyi \cite{babenko}, Graff - Nowak--Przygodzki \cite{graff}]
Given any integer sequence $\{I_n\}_{n \ge 1}$ satisfying Dold's congruences, there exists a map $f : \mathds{R}^2 \to
\mathds{R}^2$ with a fixed point $p$ such that $i(f^n, p) = I_n$ for every $n \ge 1$.
\end{theorem}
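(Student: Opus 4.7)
The strategy is to convert the realization problem, via the Dold decomposition, into the task of placing periodic orbits of prescribed period and local index sign in shrinking annular neighborhoods of the target fixed point $p$, and then to recover $I_n$ from a Lefschetz count on a disk containing the whole construction.

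First I would use Dold's congruences to write $I = \sum_{k\ge 1} a_k \sigma^k$ uniquely with $a_k \in \mathds{Z}$, and set $\delta_1 := 1 - a_1$ and $\delta_k := -a_k$ for $k \ge 2$; by construction $\sum_k \delta_k \sigma^k_n = 1 - I_n$. Writing each $\delta_k = c_k^+ - c_k^-$ with $c_k^{\pm} \ge 0$, I plan to build a continuous $f : \overline{D} \to \overline{D}$ on a large closed disk $\overline{D} = \overline{D_R}$ with $f(\overline{D}) \subset \mathrm{int}(\overline{D})$, fixing $p = 0$, and placing in each of a sequence of disjoint concentric annuli $A_k = \{r_{k+1} \le |z| \le r_k\}$ (with $r_k \searrow 0$) exactly $c_k^+$ attracting period-$k$ orbits and $c_k^-$ saddle-type period-$k$ orbits, and no other periodic points.

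Each local model $f|_{A_k}$ I would construct as a $\mathds{Z}/k$-symmetric trigonometric perturbation of the rigid rotation of $A_k$ by $2\pi/k$, of an explicit form designed to open up precisely the required attracting and hyperbolic $k$-cycles. Consecutive models would be glued across the circles $\{|z| = r_k\}$ by imposing a mild inward radial drift, so that these circles carry no periodic points and $f$ sends each closed annulus essentially into itself; since $r_k \searrow 0$, the resulting map extends continuously to $p$ by $f(0) = 0$, and since $\partial D$ is mapped strictly inside $D$, one has $\Lambda(f^n|_{\overline{D}}) = 1$ for every $n$. For fixed $n \ge 1$, the fixed points of $f^n$ in $\overline{D}$ are $p$ together with the periodic orbits whose period divides $n$; these lie in the compact set $\bigcup_{k | n} A_k \subset \overline{D} \setminus \{p\}$, so $p$ is isolated in $\Fix(f^n)$ and $i(f^n, p)$ is well defined. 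Each attracting period-$k$ orbit contributes $+k$ to $\sum_q i(f^n, q)$ when $k | n$ and $0$ otherwise, each saddle contributes $-k$ analogously, and Lefschetz's theorem gives
$$i(f^n, p) = 1 - \sum_{q \in \Fix(f^n) \setminus \{p\}} i(f^n, q) = 1 - \sum_{k \ge 1}(c_k^+ - c_k^-)\,\sigma^k_n = 1 - (1 - I_n) = I_n,$$
which is the required identity.

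The main obstacle will be the explicit step in the previous paragraph: producing, for each pair $(c_k^+, c_k^-)$, a continuous self-map of $\overline{A_k}$ with precisely the prescribed attracting and saddle $k$-cycles and no other periodic points, and matching the models across consecutive annuli so that the infinite gluing introduces no spurious periodic points and remains continuous at $p$. These local-model and matching issues are essentially the content of the explicit constructions carried out in \cite{babenko} and \cite{graff}, after which the Lefschetz bookkeeping above completes the proof.
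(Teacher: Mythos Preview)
The paper does not prove this theorem at all; it is merely stated with attribution to \cite{babenko} and \cite{graff} and used as background to motivate the example that follows, so there is no ``paper's own proof'' against which to compare your attempt.

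That said, your outline is the correct strategy and is essentially the one carried out in the cited references: decompose $I$ via Dold's congruences, realize each signed $\sigma^k$-contribution by a period-$k$ orbit of local index $\pm 1$ placed in a shrinking sequence of annuli accumulating on $p$, and recover $I_n$ from a Lefschetz count on a disk. You also correctly identify where the actual work lies --- constructing the annular local models with \emph{exactly} the prescribed periodic orbits (and no spurious ones under iteration), and gluing infinitely many of them so that the result is continuous at $p$ and introduces no extra periodic points on the seams. Since you explicitly defer those constructions to \cite{babenko} and \cite{graff}, what you have written is an accurate roadmap rather than a self-contained proof; the Lefschetz bookkeeping you wrote down is fine once those models are in hand.
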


In the case the sequence of indices $\{I_n\}_{n \ge 1}$ is unbounded, the fixed points $p$ of the maps
provided by the theorem are accumulated by periodic orbits. This explains why our example is meaningful: the sequence
of indices will be also unbounded but the fixed point will remain isolated among the set of periodic orbits of the map.

Let us begin with our example. Firstly, define a homeomorphism in a vertical strip with a source inside.
Let $t : [-1, 1] \times \mathds{R} \to [-1, 1] \times \mathds{R}$ given by
$$(x, y) \mapsto  (x + x(1-|x|), y + s(x, y)),$$
where $s : \mathds{R} \to [-1, 1]$ satisfies $s(-1, y) = s(1, y) = 1$, $\sgn(s(0, y)) = \sgn(y)$
(here $\sgn$ denotes the sign function, in particular we obtain $s(0, 0) = 0$)
and $s$ is strictly positive outside the substrip $\{|x| \le 1/2\}$.
Assume further that $s$ is chosen so that $t$ is a homeomorphism.

It follows from the definition that $(0,0)$ is the unique fixed point of $t$ and that the forward orbit
of any point $(0,y)$, with $y < 0$, tends to the lower end. As the forward orbit of any point
outside the line $x = 0$ eventually exits the substrip $\{|x| \le 1/2\}$, they all converge
to the upper end. Finally, notice that $(0,0)$ is a source of the homeomorphism
$t$, so that for every $n \ge 1$
\begin{equation}\label{eq:repelling}
i(t^n, (0,0)) = 1.
\end{equation}

Let us proceeded to the example. The construction will be done within the same
setting as in the previous section:
identifying the plane $\mathds{R}^2$ with the compactification of $S^1 \times \mathds{R}$ with the lower end.
Consider a degree--2 covering map $h : S^1 \to S^1$ which pointwise fixes an interval $I_0$ centered at
the angle $\theta = 0$, i.e. $h(x) = x$ for every $x \in I_0$.
Define $f : S^1 \times \mathds{R} \to S^1 \times \mathds{R}$ by $f(\theta, r) = (h(\theta), r+1)$.

As usual, we work in $S^1 \times \mathds{R} \cup \{e^-\}$, where $e^-$ denotes the lower end of the cylinder.
The map $f$ is naturally extended to $e^-$ making it a fixed point.
The strip $A = \{(\theta, r): \theta \in I_0\}$ satisfies $f(A) = A$.
Identify $A$ to $[-1,1] \times \mathds{R}$ via a linear transformation $g$ which fixes the second coordinate.
Redefine the map $f$ inside $A$ as $f_{|A} = g^{-1} \circ t \circ g$.
The point $q = (0,0)$ has now become a source for $f$. Moreover, the forward
orbit of every point in $A$ goes to the upper end of the cylinder, denoted $e^+$, except for the points of the form $(0, r)$ with $r < 0$ whose forward orbit converge to $e^-$.

If we now extend the map $f$ also to the upper end by $f(e^+) = e^+$, we obtain a map conjugate to a degree--2 map
in a 2-sphere. Since $e^+$ is then an attracting fixed point, it follows that $i(f^n, e^+) = 1$
for every $n \ge 1$. Lefschetz theorem implies that
$$i(f^n, e^-) + i(f^n, q) = 2^n,$$
and since $f$ is locally conjugate to $t$ around the fixed points $q$ and $(0,0)$ respectively,
we deduce from (\ref{eq:repelling}) that
$$i(f^n, e^-) = 2^n - 1$$
for every $n \ge 1$.

\begin{figure}[htb]
\centering
\includegraphics[scale = 0.4]{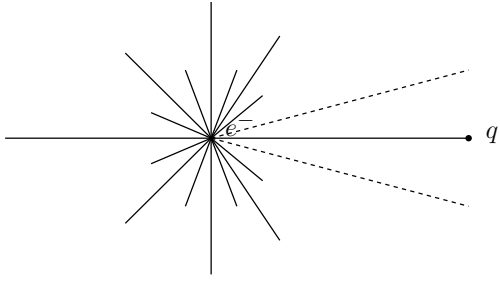}
\caption{$\Inv(f, D)$ sketched in the plane $S^1 \times \mathds{R} \cup \{e^-\}$, where the
center of the invariant set correspond to the fixed point $e^-$ and the dashed lines represent
the border of the strip $A$.}
\label{fig:cantorejemplo}
\end{figure}

Finally, let us describe the non--trivial invariant set for $f$ which contains $e^-$, see Figure \ref{fig:cantorejemplo}.
Outside the strip $A$, the radial coordinate is increased by $f$. Moreover, the positive orbit
of a point in $A$ either tends to $e^+$ or belongs to the half-line $R_0 = \{(0, r): r \le 0\}$
and so converges to $e^-$. Thus, the only periodic point of $f$ other than the ends is the point $q = (0,0)$.
Furthermore, the maximal invariant subset of $f$ inside the topological disk $D = S^1 \times (-\infty, 0] \cup \{e^-\}$
is exactly the backward orbit of $R_0$,
$$\Inv(f, D) = \bigcup_{n \ge 0}f^{-1}(R_0),$$
which is composed of the countably union of half lines of the form
$$\{(\theta_0, r): h^k({\theta_0}) = 0, r \le k\}$$
for some non--positive integer $k$.
The dynamics within $\Inv(f, D)$ is easy: the forward orbit of every point tends to $e^-$ except
for the end points of the half lines which are eventually mapped onto $q$.

In sum, when one ask the fixed points only to be non--accumulated by other periodic points
we may have an unbounded fixed point sequence.
However, this example is not completely satisfactory as its behavior resembles very much
to a source. Incidentally, notice that the restriction of the map to the cylinder is a 2:1 covering map.

\textbf{Question:} Is it possible to construct an example in which the behavior of $\{i(f^n, p)\}_{n \ge 1}$
differs to a great extent from a geometric progression?

\section{Lefschetz zeta function}

The Lefschetz zeta function associated to the fixed point index sequence $I = \{I_n\}_{n \ge 1}$ is defined as
$$\mathcal Z_f(t) = \exp\left(\sum_{n = 1}^{\infty}\frac{I_n t^n}{n}\right)$$
It is easy to see that if $I = \sigma^k$ then $\mathcal{Z}_f(t) = 1/(1 - t^k)$.
Consequently, if $I = \sum_{k \ge 1} a_k \sigma^k$
we obtain the formal identity $\mathcal Z_f (t) = \prod_{k \ge 1} (1 - t^k)^{-a_k}$.
This product is finite if and only if any (hence all) statement in Lemma \ref{lem:index} holds.
When $I$ is the fixed point index sequence at a fixed point $p$ we denote the (local) Lefschetz
zeta function $\mathcal{Z}_f^p$. For a more detailed account on this object see \cite{franksbook}
and also \cite{babenko}.

The Lefschetz zeta function is a bookkeeping device which contains the same information as the fixed point
index sequence in a more condensed way. In this language, Theorem \ref{thm:formula} takes the following form:

\begin{corollary}
Let $U$ be an open subset of the plane, $f : U \to f(U) \subset \mathds{R}^2$ continuous and $p$ a fixed point for $f$
which is isolated as an invariant set and neither a sink nor a source. Then
\begin{equation}\label{eq:zeta}
\mathcal Z_f^p(t) = \frac{(1 - t^{k_1})^{b_1}(1 - t^{k_2})^{b_2} \ldots (1 - t^{k_l})^{b_l}}{1 - t}
\end{equation}
for some integer $l \ge 1$ and positive integers $b_1, \ldots, b_l$.
\end{corollary}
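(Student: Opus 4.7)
The plan is to derive this corollary as a direct translation of Theorem \ref{thm:formula} into the language of Lefschetz zeta functions, using the formal identity established just before the statement. Since the hypotheses match those of Theorem \ref{thm:formula} exactly, essentially no new work is needed beyond a bookkeeping computation.

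First, I would invoke Theorem \ref{thm:formula} to write
\[
\{i(f^n,p)\}_{n\ge 1} = \sigma^1 - \sum_{k \in F} a_k \sigma^k,
\]
where $F \subset \mathds{N}$ is a finite non-empty set and each $a_k$ is a positive integer. Equivalently, the coefficients in the normalized expansion $I = \sum_{k\ge 1} c_k \sigma^k$ are $c_1 = 1 - a_1$ (with $a_1 = 0$ if $1 \notin F$) and $c_k = -a_k$ for $k \in F \setminus \{1\}$, and $c_k = 0$ otherwise.

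Next, I would apply the formal identity recalled in the preamble of Section 6, namely
\[
\mathcal Z_f^p(t) = \prod_{k \ge 1} (1 - t^k)^{-c_k},
\]
valid for any fixed point index sequence written as $\sum c_k \sigma^k$. Substituting the coefficients above yields
\[
\mathcal Z_f^p(t) = (1 - t)^{-1} \prod_{k \in F}(1 - t^k)^{a_k}.
\]
Enumerating $F = \{k_1, \ldots, k_l\}$ and setting $b_j = a_{k_j}$, the $b_j$ are positive integers and $l = \#F \ge 1$, giving exactly the expression \eqref{eq:zeta}.

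There is no real obstacle: the corollary is a reformulation of Theorem \ref{thm:formula} through a purely algebraic translation between additive combinations of the $\sigma^k$ and multiplicative factorizations of the associated zeta function. The only minor point to verify is that the product on the right-hand side is finite, which follows because $F$ is finite by Theorem \ref{thm:formula} (and this finiteness is equivalent to the periodicity of the index sequence given by Lemma \ref{lem:index}).
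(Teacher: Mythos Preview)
Your proposal is correct and matches the paper's approach exactly: the paper presents this corollary as an immediate reformulation of Theorem~\ref{thm:formula} via the formal identity $\mathcal Z_f(t) = \prod_{k \ge 1}(1-t^k)^{-a_k}$, and does not even include a proof. Your computation of the coefficients and the resulting product is precisely the intended translation.
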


Theorem \ref{thm:formula} allows to extend the work of Franks in \cite{frankszeta}. Define the degree
of a source as the integer $i(f, p)$.
\begin{proposition}
Let $f$ be a continuous self-mapping of $D^2$ or $S^2$ such that every periodic orbit of $f$ is isolated as an invariant set.
Assume further that the set of periodic points of $f$ is finite.
Denote $\mathcal A$ the set of orbits of sinks and degree--1 sources of $f$ and $\mathcal S$ the set of orbits of sources
with degree different from -1, 0, 1.
Then
\begin{enumerate}
\item $\# \mathcal S \le 1$ and $\# \mathcal S = 1$ only in the case $f : S^2 \to S^2$ has degree $d$,
with $|d| \ge 1$, and the only element in $\mathcal S$ is a degree--$d$ source and it is fixed.
\item $\# \mathcal A \ge 1$ and the inequality is strict for degree-1 maps $f$ of $S^2$.
\item Given a degree--(-1) source of period $m$
	\begin{enumerate}
	\item either there exists another periodic point of period $m$
	\item or $m$ is even and
there is at least one degree--(-1) source of period $m/2$.
	\end{enumerate}
\end{enumerate}
\end{proposition}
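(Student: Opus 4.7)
The plan is to pass to the Lefschetz zeta function and exploit its orbit-wise factorization. By the Lefschetz fixed-point theorem, $\mathcal{Z}_f(t) = 1/((1-t)(1-dt))$ when $f : S^2 \to S^2$ has degree $d$, and $\mathcal{Z}_f(t) = 1/(1-t)$ when $f : D^2 \to D^2$. Under our hypotheses the set of periodic points is finite and each orbit is an isolated invariant set, so $\mathcal{Z}_f = \prod_{\mathcal{O}}\mathcal{Z}_f^{\mathcal{O}}$, with each factor computed by Propositions \ref{prop:sinks}, \ref{prop:sources} and by Theorem \ref{thm:formula} applied to $f^{k_\mathcal{O}}$ at any point of $\mathcal{O}$. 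An orbit of period $m$ contributes: $(1-t^m)^{-1}$ if it lies in $\mathcal A$; $(1-d_\mathcal{O} t^m)^{-1}$ if it lies in $\mathcal S$ (so $|d_\mathcal{O}| \ge 2$); $(1+t^m)^{-1}$ if it is a degree-$(-1)$ source; the trivial factor if a degree-$0$ source; and $(1-t^m)^{-1}\prod_{j \in F_\mathcal{O}}(1-t^{mj})^{a_j^\mathcal{O}}$ otherwise.

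For Part (1), every factor above has its zeros and poles on the unit circle \emph{except} the $\mathcal S$-contributions, whose poles lie on the circle of radius $|d_\mathcal{O}|^{-1/m} < 1$. Since the LHS has at most one pole inside the open unit disk (simple, at $t = 1/d$ when $|d| \ge 2$), matching multiplicities forces $\#\mathcal S = 0$ when $|d| \le 1$ and, when $|d| \ge 2$, exactly one $\mathcal S$-orbit with period $1$ and local degree $d$. For Part (2), take the order of $\mathcal Z_f$ at $t = 1$: each $\mathcal A$-orbit contributes $-1$, the $\mathcal S$-orbit and the degree-$(-1)$ sources contribute $0$, and each non-sink/non-source orbit contributes $\sum_j a_j^\mathcal{O} - 1 \ge 0$; the LHS has order $-1$ (or $-2$ when $d = 1$). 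Rearranging yields $\#\mathcal A \ge 1$, strict when $d = 1$.

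For Part (3), I would Möbius-invert the Lefschetz trace formula. Set $L_m = \sum_{n \mid m}\mu(m/n)\Lambda(f^n)$, so the LHS gives $L_m = \mathbf{1}_{m=1} + \phi_d(m)$ with $\phi_e(m) = \sum_{l \mid m}\mu(m/l)e^l$. An orbit of period $k_\mathcal{O}$ dividing $m$ contributes $k_\mathcal{O}\phi_{e_\mathcal{O}}(m/k_\mathcal{O})$ to $L_m$, with $e_\mathcal{O} \in \{1, d_\mathcal{O}, -1, 0\}$ according to type; a second Möbius collapse of the non-sink/non-source contribution reduces it to $m\,\mathbf{1}_{k_\mathcal{O} = m}(1 - a_1^\mathcal{O}) - m\,a_{m/k_\mathcal{O}}^\mathcal{O}$. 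The crucial elementary identity is $\phi_{-1}(M) = -1, 2, 0, 0, \dots$ for $M = 1, 2, 3, 4, \dots$, so a degree-$(-1)$ source contributes to $L_m$ only when its period is $m$ (contribution $-m$) or $m/2$ (contribution $+m$). Assuming (a) and (b) both fail and $m \ge 2$: the $\mathcal A$-contribution is $0$ (since $\phi_1(M) = 0$ for $M \ge 2$); the degree-$(-1)$ contribution is just $-m$; and if $|d| \ge 2$ the $\mathcal S$-term $\phi_d(m)$ matches the LHS $\phi_d(m)$ and drops out, whereas for $|d| \le 1$ the LHS is $0$ in every case except $d = -1,\,m = 2$ (where it is $\phi_{-1}(2) = 2$). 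In every situation the equation reduces to $\sum_{\mathcal{O} \text{ non-sink/non-source}} a_{m/k_\mathcal{O}}^\mathcal{O}$ being a negative integer, impossible. For $m = 1$ the argument is direct: $\Lambda(f) = 1 + d$ with the given orbit as sole fixed point forces $d = -2$, but Part (1) then demands a degree-$(-2)$ fixed source, a second fixed point contradicting (a).

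The hardest step will be the careful bookkeeping in the Möbius inversion of Step 4, in particular executing the second Möbius collapse to obtain the compact form $-m\,a_{m/k_\mathcal{O}}^\mathcal{O}$ for the non-sink/non-source contributions, and recognizing that the tiny support $\{1,2\}$ of $\phi_{-1}$ is precisely what singles out the period-$(m/2)$ alternative in (b).
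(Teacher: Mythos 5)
Your proof is correct, and for parts (1) and (2) it is essentially the paper's own argument: both clear denominators in $\mathcal Z_f = \prod_{\mathcal O}\mathcal Z_f^{\mathcal O}$, compare poles strictly inside the unit disk for (1), and compare the order (equivalently, the multiplicity of the root $t=1$ in the cleared-denominator polynomial) for (2). Part (3), however, is handled by a genuinely different route. The paper argues multiplicatively on the cleared-denominator polynomial identity: after part (1) the left-hand side consists only of factors of the form $(1-t^s)$, so a factor $(1+t^m)$ on the right must be absorbed into a $(1-t^{2m})$; the companion $(1-t^m)$ is supplied either by an $m$-periodic orbit or, when $m$ is even, by $(1-t^{m/2})(1+t^{m/2})$ with the $(1+t^{m/2})$ coming from a degree-$(-1)$ source of period $m/2$. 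You instead argue additively: you M\"obius-invert the Lefschetz trace sequence to the exponent sequence $L_m$ (equivalently the exponents in the factorization $\mathcal Z_f=\prod_k(1-t^k)^{-L_k/k}$) and the decisive input becomes the elementary identity that $\phi_{-1}$ is supported on $\{1,2\}$ with values $-1,2$. Your version has the merit of being a fully explicit bookkeeping argument culminating in a sign contradiction, whereas the paper's factor-matching sentence is terser and requires some care to make fully rigorous; the price you pay is the second-level M\"obius collapse, which you correctly identify as the delicate step.

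One small slip: your collapsed expression $m\,\mathbf{1}_{k_\mathcal{O}=m}(1-a_1^\mathcal{O})-m\,a_{m/k_\mathcal{O}}^\mathcal{O}$ for the contribution of a non-sink/non-source orbit to $L_m$ double-counts $a_1^\mathcal{O}$ when $k_\mathcal{O}=m$; the correct collapse is $k_\mathcal{O}\,\mathbf{1}_{m=k_\mathcal{O}}-m\,a_{m/k_\mathcal{O}}^\mathcal{O}$, i.e. $m(1-a_1^\mathcal{O})$ when $k_\mathcal{O}=m$ and $-m\,a_{m/k_\mathcal{O}}^\mathcal{O}$ otherwise. This has no bearing on the argument, since orbits of period $m$ are excluded by the negation of (a) when $m\ge 2$ and the case $m=1$ is treated separately, but it should be stated correctly.
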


The proof only involves minor modifications of Franks' argument. It is based on the following simple fact (assume we have
a finite number of periodic orbits): the fixed point index
of $f$ is the sum of the fixed point indices of every periodic point, which translates to
\begin{equation}\label{eq:zetaproduct}
\mathcal Z_f(t) = \prod \mathcal Z_f^p(t),
\end{equation}
where the product runs over all periodic orbits of $f$.

On the one hand, we have that the fixed point indices of a degree-$d$ map of $S^2$ are $i(f^n) = 1 + d^n$, hence
\begin{equation*}
\mathcal Z_f(t) = \frac{1}{(1-t)(1-dt)}.
\end{equation*}
The zeta function of a self-map of the disk $D^2$ is $\mathcal Z_f(t) = 1/(1-t)$.

On the other hand, the local contribution of the orbit of each periodic point $p$ of period $m$ is:
\begin{itemize}
\item If $p$ is a sink, $\mathcal Z_f^p(t) = 1/(1-t^m)$.
\item If $p$ is a source of degree $r$, $\mathcal Z_f^p(t) = 1/(1-rt^m)$.
\item Otherwise, replace $t$ by $t^m$ in Equation (\ref{eq:zeta}).
\end{itemize}

\begin{proof}
Denote $\mathcal S'$ the set of orbits of periodic sources of degree--(-1) and $\mathcal H$ the
set of orbits periodic points which are neither sinks nor sources.
The case of self-maps of $D^2$ is for our purposes the same as of a degree--0 self-map of $S^2$.
Assume $f: S^2 \to S^2$ has degree $d$.

After removing denominators in Equation \ref{eq:zetaproduct} we obtain
\begin{equation*}
(1-t)(1-dt) \prod_{p \in \mathcal H} \prod_{i=i(p)} (1-t^{k_im_p})^{b_i} =
\prod_{p \in \mathcal H \cup \mathcal A} (1-t^{m_p})
\prod_{p \in \mathcal S'} (1 + t^{m_p}) \prod_{p \in \mathcal S} (1 - r_p t^{m_p})
\end{equation*}
In the products we only take one point $p$ representing each periodic orbit and $m_p$ denotes the period of the orbit.
Firstly, look at the pieces of both polynomials not located in the unit circle. Clearly, one must have
that $(1-dt)$, if $|d|\ge 1$, or 1, otherwise, is equal to a product of factors of type $(1 - d_p t^{m_p})$.
Evidently, there must be only one of such factors and correspond to the orbit of a source of period 1 and degree $d$ in the case
in which $|d| \ge 1$ and none otherwise. This proves (1).

In order to prove (2), it suffices to compute the multiplicity of 1 as a root for both polynomials.
In the RHS its multiplicity is equal to $\# \mathcal H + \# \mathcal A$ whereas in the LHS it is greater
or equal than $1 + \# \mathcal H$ if $d \neq 1$ or $2 + \# \mathcal H$ if $d = 1$.

For (3), in view that in the LHS remain only factors of type $(1-t^s)$
the only way to cancel out a factor of the form $(1+t^m)$ in the RHS is via a factor $(1-t^m)$, which may arise
from a $m$--periodic orbit or from the product $(1-t^{m/2})(1+t^{m/2})$ if $m$ is even.
\end{proof}

Proposition 20 can be rewritten to express sufficient conditions for the existence of infinite periodic orbits
in a class of maps, those for which every periodic orbit is isolated as an invariant set, which contains
that of $C^1$ maps for which every periodic point is hyperbolic.

\begin{corollary}
Using the notation of Proposition 20, if any of the following conditions is fulfilled then $f$ has
infinitely many periodic orbits.
\begin{itemize}
\item $\#\mathcal S \ge 2$.
\item The unique element of $\mathcal S$ is the orbit of a degree--$r$ source which is not a fixed point or
the map $f : S^2 \to S^2$ has degree different from $r$.
\item $\mathcal A$ is empty or contains just one periodic orbit and $f : S^2 \to S^2$ has degree 1.
\item There is a degree--$(-1)$ source with odd period $m$ and no other $m$--periodic point.
\item There is a degree--$(-1)$ source with even period $m$ and no degree--$(-1)$ source with period $m/2$.
\end{itemize}
\end{corollary}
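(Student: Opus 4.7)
The plan is to argue by contrapositive, using Proposition 20. Suppose $f$ has only finitely many periodic orbits; since by hypothesis every periodic orbit of $f$ is isolated as an invariant set, Proposition 20 applies and all three of its conclusions are in force. I would then produce a direct contradiction in each of the five cases listed in the corollary.

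For the first three bullets the contradiction is immediate from parts (1) and (2) of Proposition 20. The condition $\#\mathcal{S}\ge 2$ violates the bound $\#\mathcal{S}\le 1$ of part (1). The second bullet records exactly the two ways in which the refined clause of part (1) can fail: when $\#\mathcal{S}=1$ the unique element must be a fixed source whose degree coincides with that of $f$, so either placing this orbit on a non-trivial periodic orbit, or assigning $f$ a global degree different from $r$, contradicts (1). The third bullet negates part (2), which requires $\#\mathcal{A}\ge 1$ in general and $\#\mathcal{A}\ge 2$ for a degree--$1$ self-map of $S^2$.

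For the last two bullets I would invoke the disjunction in part (3) of Proposition 20: any degree--$(-1)$ source of period $m$ forces either (a) the existence of another $m$-periodic point or (b) $m$ is even and there exists a degree--$(-1)$ source of period $m/2$. In the fourth bullet, the oddness of $m$ kills alternative (b), and the hypothesis simultaneously denies (a), giving a contradiction. In the fifth bullet, the absence of a degree--$(-1)$ source of period $m/2$ rules out (b), forcing (a), which read in parallel with bullet four is also denied; in each case part (3) is contradicted.

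I anticipate no substantial technical difficulty: the entire argument is a bookkeeping exercise in contraposition. The only mildly delicate point is parsing the compound statements (1) and (3) of Proposition 20 carefully enough to verify that each listed bullet matches exactly one failure of one clause, so that the contrapositive yields the desired conclusion that the set of periodic orbits of $f$ is infinite.
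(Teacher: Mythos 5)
Your approach is exactly the implicit one: the paper gives no proof of the corollary, treating it as an immediate contrapositive of Proposition 20, and that is precisely what you execute. The handling of the first four bullets is correct and cleanly matches one negated clause of Proposition 20 each.

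The one place that warrants a flag is the fifth bullet. As written in the paper, that bullet hypothesizes only ``a degree--$(-1)$ source with even period $m$ and no degree--$(-1)$ source with period $m/2$,'' which denies alternative $(b)$ of Proposition 20(3) but says nothing against alternative $(a)$. So taken literally, this hypothesis together with Proposition 20(3) merely forces the existence of another $m$-periodic point, which is not a contradiction with finiteness. You notice this and supply the missing clause by ``reading in parallel with bullet four'' -- i.e., tacitly adding ``and no other $m$-periodic point.'' That is surely the intended reading (otherwise the bullet is not a consequence of Proposition 20(3) at all), and under it your argument closes correctly. But you should state explicitly that the fifth bullet, to contradict Proposition 20(3), must be read as also excluding other $m$-periodic points; the literal text of the corollary omits that clause, and your phrase ``which ... is also denied'' glosses over a hypothesis you are in fact importing.
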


In the presence of infinite periodic orbits we can bound from below the number of periodic orbits of $f$
of each period.

\begin{proposition}
Let \(f: S^2 \rightarrow S^2\) be a degree--\(d\) map such that all  its periodic orbits are isolated as invariant sets.
Then, if \( f\) has no sources of degree \(r\) with \(|r|>1\) (this happens, in particular, if $f$ is $C^1$) we have that \( N_n(f) >d^n\).
\end{proposition}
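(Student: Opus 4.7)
The plan is to apply the Lefschetz fixed point formula to every iterate $f^n$ and show that each local fixed point index contributed is at most $1$. Since $f:S^2 \to S^2$ has degree $d$, one has $\Lambda(f^n) = 1 + d^n$, so
\[
1 + d^n \;=\; \sum_{p \in \Fix(f^n)} i(f^n, p).
\]
Once the bound $i(f^n, p) \le 1$ is established for every such $p$, the conclusion $N_n(f) = \#\Fix(f^n) \ge 1 + d^n > d^n$ follows at once.

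To obtain the bound, fix $p \in \Fix(f^n)$ and let $m \mid n$ be its exact $f$-period. By hypothesis the $f$-orbit of $p$ is isolated as an invariant set of $f$; taking a neighborhood of the orbit that splits into $m$ pairwise disjoint pieces, one around each point of the orbit, propagates this isolation to $p$ being isolated as an invariant set of $g := f^m$. Three mutually exclusive cases then occur. If $p$ is a sink of $g$, Proposition \ref{prop:sinks} gives $i(f^n, p) = i(g^{n/m}, p) = 1$. If $p$ is a source of $g$, i.e.~its $f$-orbit is a periodic source of $f$ in the sense of Definition \ref{def:repeller}, then by hypothesis the degree $r$ furnished by Proposition \ref{prop:sources} applied to $g$ at $p$ satisfies $|r| \le 1$, whence $i(f^n, p) = r^{n/m} \le 1$ (it is $0$, $1$, or $\pm 1$ depending on $r$ and the parity of $n/m$). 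Otherwise, $p$ is a fixed point of the planar continuous map $g$ which is isolated as an invariant set and is neither a sink nor a source, so Corollary \ref{cor:index<1} applied to $g$ at $p$ yields $i(g^k, p) \le 1$ for every $k \ge 1$, and in particular $i(f^n, p) \le 1$.

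Summing over all fixed points of $f^n$ thus gives $1 + d^n \le N_n(f)$, which is the required strict inequality. The only substantive point, and the main obstacle worth highlighting, is the trichotomy above: one has to interpret the hypothesis ``no sources of degree $r$ with $|r|>1$'' at the periodic (not merely fixed) level, in line with the convention already used in the preceding proposition that a source is an orbit which is a repeller of $f$, its degree being the integer produced by Proposition \ref{prop:sources} when the orbit is regarded as a fixed source of the appropriate iterate. No new machinery beyond Theorem \ref{thm:formula}, Propositions \ref{prop:sinks} and \ref{prop:sources}, and Lefschetz is needed.
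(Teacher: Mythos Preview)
Your argument is correct and follows essentially the same route as the paper: bound each local index $i(f^n,p)$ by $1$ using Corollary~\ref{cor:index<1} (for the non-sink, non-source case) together with Propositions~\ref{prop:sinks} and~\ref{prop:sources} (for sinks and sources), then apply Lefschetz to get $N_n(f)\ge 1+d^n$. Your version is simply more explicit than the paper's two-line proof in spelling out the passage from a periodic point of $f$ to a fixed point of $g=f^m$ and in treating the three cases of the trichotomy separately.
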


\begin{proof}
From Corollary \ref{cor:index<1} and Proposition \ref{prop:sources} we get that the index of any fixed point of $f^n$ is
bounded by 1. Therefore,
$$N_n(f) \ge \sum_{p \in N_n(f)} i(f^n, p) = 1 + d^n$$
by Lefschetz Theorem.


\end{proof}

The previous proposition depicts a special case in which the Growth Rate Inequality (Equation \ref{eq:growth}) holds.

\end{document}